\newtheorem{theorem}{Theorem}
\theoremstyle{plain}
\newtheorem{corollary}{Corollary}
\newtheorem{lemma}{Lemma}
\newtheorem{proposition}{Proposition}
\theoremstyle{definition}
\newtheorem{definition}{Definition}
\theoremstyle{remark}
\newtheorem{example}{Example}
\newtheorem{remark}{Remark}
\numberwithin{equation}{section}
\newcommand{\lb}{\llbracket}
\newcommand{\rb}{\rrbracket}
\newcommand{\rbp}{{\rb_\phi}}
\newcommand{\AV}{\mathbb{A}}
\newcommand{\TTM}{\mathbb{T}M}
\newcommand{\id}{\mathsf{id}}
\newcommand{\Lied}{\mathcal{L}}
\renewcommand{\varpi}{{\tilde\pi}}
\newcommand{\jpi}{j}
\renewcommand{\varrho}{{\tilde\rho}}
\newcommand{\GL}{\mathbf{GL}}
\DeclareMathOperator{\Ann}{Ann}
\DeclareMathOperator{\Hom}{Hom}
\DeclareMathOperator{\End}{End}
\providecommand{\abs}[1]{\lvert#1\rvert}
\begin{document}
\title[$AV$-Courant Algebroids]{$AV$-Courant Algebroids and Generalized CR Structures}
\author{David Li-Bland}


\begin{abstract}
We construct a generalization of Courant algebroids which are classified by the third cohomology group $H^3(A,V)$, where $A$ is a Lie Algebroid, and $V$ is an $A$-module. We see that both Courant algebroids and $\mathcal{E}^1(M)$ structures are examples of them. Finally we introduce generalized CR structures on a manifold, which are a generalization of generalized complex structures, and show that every CR structure and contact structure is an example of a generalized CR structure. 
\end{abstract}
\maketitle

\section{Introduction}

Courant algebroids and the Dirac structures associated to them were first introduced by Courant and Weinstein (see \cite{CW88} and \cite{C90}) to provide a unifying framework for studying such objects as Poisson and symplectic manifolds. A\"{i}ssa Wade later introduced the related $\mathcal{E}^1(M)$-Dirac structures in \cite{W00} to describe Jacobi structures.

In \cite{H03}, Hitchin defined generalized complex structures which are further described by Gualtieri in \cite{G07}. Generalized complex structures unify both symplectic and complex structures, interpolating between the two, and have appeared in the context of string theory \cite{LMTZ05}. In \cite{IW05} Iglesias and Wade describe generalized contact structures, an odd-dimensional analog to generalized complex structures, using the language of $\mathcal{E}^1(M)$-Dirac structures.

In this paper, we shall define $AV$-Courant Algebroids, a generalization of Courant algebroids which also allows one to describe $\mathcal{E}^1(M)$-Dirac structures. We will show that these have a classification similar to \u{S}evera's classification of exact Courant algebroids in \cite{SW01}.

To be more explicit, let $M$ be a smooth manifold, $A\to M$ be a Lie algebroid with anchor map $a:A\to TM$,  and $V\to M$ a vector bundle which is an $A$-module. If we endow $V$ with the structure of a trivial Lie algebroid (that is: trivial bracket and anchor), then it is well know that the extensions of $A$ by $V$ are a geometric realization of $H^2(A,V)$ (see \cite{M05}, for instance). In this paper, we introduce $AV$-Courant algebroids and describe how they are a geometric realization of $H^3(A,V)$.

We then go on to show how to simplify the structure of certain $AV$-Courant algebroids by pulling them back to certain principal bundles. Indeed, in the most interesting cases, the pullbacks will simply be exact Courant algebroids.

We then introduce $AV$-Dirac structures, a special class of subbundles of an $AV$-Courant algebroid which generalize Dirac structures. Finally, we will introduce a special class of $AV$-Dirac structures, called generalized CR structures, which allow us to describe any complex, symplectic, CR or contact structure on a manifold, as well as many interpolations of those structures. Poisson and Jacobi structures are central to this description; and while Jacobi brackets do not generally satisfy the leibniz rule, as a consequence of the ideas in this paper we show how one can think of a Jacobi structure on a manifold as a Lie bracket on the sections of a certain line bundle which does satisfy the Leibniz rule.

It is important to note that there are other constructions related to $AV$-Courant algebroids. For instance, recently Z. Chen, Z. Liu and Y.-H. Sheng introduced the notion of $E$-Courant algebroids \cite{CLS08E} in order to unify the concepts of omni-Lie algebroids (introduced in \cite{CL07Om}, see also \cite{CLS08Om}) and generalized Courant algebroids or Courant-Jacobi algebroids (introduced in \cite{NC04} and \cite{GM03} respectively; they are equivalent concepts see \cite{NC04}). The key property that both $E$-Courant algebroids and $AV$-Courant algebroids share is that they replace the $\mathbb{R}$-valued bilinear form of Courant algebroids with one taking values in an arbitrary vector bundle ($E$ or $V$ respectively). Nevertheless, while there is some overlap between $E$-Courant algebroids and $AV$-Courant algebroids in terms of examples, these constructions are not equivalent. Moreover, this paper is distinguished from \cite{CLS08E} by having the definition of generalized CR manifolds as one of its main goals.

Meanwhile generalized CRF structures, introduced and studied in great detail by Izu Vaisman in \cite{V08}, and generalized CR structures describe similar objects. To summarize, a complex structure on a manifold $M$ is a subbundle $H\subset TM\otimes\mathbb{C}$ such that 
\begin{equation}\label{complex}H\oplus\bar H=TM\otimes\mathbb{C}\end{equation}
 and $[H,H]\subset H$. The definition of a CR structure simply relaxes (\ref{complex}) to $H\cap\bar H=0$. On the other hand, the definition of a generalized complex structure replaces $TM$ with the standard Courant algebroid $\TTM=T^*M\oplus TM$ in the definition of a complex structure, and in addition, requires $H\subset \TTM\otimes\mathbb{C}$ to be isotropic.

 The definition of a generalized CRF structure parallels the definition of a generalized complex structure, but relaxes the requirement that $H\oplus\bar H=\TTM\otimes\mathbb{C}$ to $H\cap\bar H=0$. Among numerous interesting examples of generalized CRF structures are normal contact structures and normalized CR structures (namely those CR structures $H\subset TM\otimes\mathbb{C}$ for which there is a splitting $TM\otimes\mathbb{C}=H\oplus\bar H\oplus Q_c$ and $[H,Q_c]\subset H\oplus Q_c$).

 Generalized CR structures differ from generalized CRF structures in multiple ways, in particular they replace the standard Courant algebroid with an $AV$-Courant algebroid $\AV$, and furthermore they take a different approach to describe contact and CR structures, using only maximal isotropic subbundles but allowing $H\cap\bar H$ to contain `infinitesimal' elements.

\vskip.2in \noindent {\bf Acknowledgements.} We would like to thank
Eckhard Meinrenken for all his helpful suggestions and discussions, his patience and his encouragement. We would like to thank A\"{i}ssa Wade and Henrique Bursztyn for their encouragement and suggestions. D.L.-B. was supported by an NSERC CGS-D Grant.
 
\section{$AV$-Courant Algebroids}
Let $M$ be a smooth manifold, $A\to M$ a Lie algebroid,  and $V\to M$ a vector bundle which is an $A$-module, that is, there is a $C^\infty(M)$-linear Lie algebra homomorphism 
\begin{equation}\label{AModule}\Lied_{\cdot}:\Gamma(A)\to \End(\Gamma(V))\end{equation}
satisfying the Leibniz rule. (See \cite{M05} for more details.)

For any $A$-module $V$, the sections of $V\otimes\wedge^* A^*$ have the structure of a graded right $\wedge^*\Gamma( A^*)$-module, and there are several important derivations of its module structure which we shall use throughout this paper. The first is the interior product with a section $X\in\Gamma(A)$, $$\iota_X:\Gamma (V\otimes \wedge^i  A^*)\to\Gamma (V\otimes\wedge^{i-1}A^*),$$ a derivation of degree -1.

The second is the Lie derivative, a derivation of degree 0, defined to be the unique derivation of $V\otimes\wedge^* A^*$ whose restriction to $V$ is given by (\ref{AModule}), and such that the graded commutator with $\iota_\cdot$ satisfies
$$[\Lied_X,\iota_Y]=\iota_{[X,Y]}.$$
Finally the differential $d$, a derivation of degree 1, is defined inductively by the graded commutator $\Lied_X=[d,\iota_X]$ (for all $X\in\Gamma(A)$).

It is easy to check that $d^2=0$, and the cohomology groups of the complex $(\Gamma (V\otimes \wedge^\bullet  A^*),d)$ are denoted $H^\bullet(A,V)$.

\subsection{Definition of $AV$-Courant Algebroids}
Let $A$ be a Lie algebroid, and $V$ an $A$-module.
\begin{definition}[$AV$-Courant Algebroid]\label{defnextend}
Let $\AV$ be a vector bundle over $M$, with a $V$-valued symmetric bilinear form $\langle\cdot,\cdot\rangle$ on the fibres of $\AV$, and a bracket $\lb,\rb$ on sections of $\AV$. Suppose further that there is a short exact sequence of bundle maps 
\begin{equation}\label{exactsequence}0\to V\otimes A^*\xrightarrow{ \jpi}\AV\xrightarrow{\pi}A\to 0.\end{equation}
such that for any $e\in\Gamma(\AV)$ and $\xi\in\Gamma(V\otimes A^*)$,
\begin{equation}\label{bracketcond}\langle e, \jpi(\xi)\rangle=\iota_{\pi(e)}\xi.\end{equation}
 The bundle $\AV$ with these structures is called an $AV$-Courant algebroid if, for $f\in C^\infty(M)$, and $e,e_i\in\Gamma(\AV)$, the following axioms are satisfied:

\begin{enumerate}
\renewcommand{\labelenumi}{AV-\arabic{enumi}}
\item $\lb e_1,\lb e_2, e_3 \rb\rb=\lb\lb e_1,e_2\rb,e_3\rb+\lb e_2,\lb e_1,e_3\rb\rb$\label{AX4}

\item $\pi(\lb e_1,e_2\rb)=[\pi(e_1),\pi(e_2)]$\label{AX5}
\item $\lb e, e\rb = \frac{1}{2}D\langle e,e\rangle$ where $D=\jpi\circ d$\label{AX6}
\item $\Lied_{\pi(e_1)}\langle e_2,e_3\rangle = \langle\lb e_1,e_2\rb,e_3\rangle+\langle e_2,\lb e_1,e_3\rb\rangle $\label{AX7}
\end{enumerate}

we will often refer to $\lb\cdot,\cdot\rb$ as the Courant bracket.

\end{definition}

\begin{remark}\label{brktDer}
Axioms (AV-\ref{AX4}) and (AV-\ref{AX7}) state that $\lb e,\cdot\rb$ is a derivation of both the Courant bracket and the bilinear form, while Axiom (AV-\ref{AX5}) describes the relation of the Courant bracket to the Lie algebroid bracket of $A$. One should interpret Axiom (AV-\ref{AX6}) as saying that the failure of $\lb\cdot,\cdot\rb$ to be skew symmetric is only an `infinitesimal' $D(\cdot)$.

The bracket is also derivation of $\AV$ as a $C^\infty(M)$-module in the sense that
 $$\lb e_1,f e_2\rb=f\lb e_1,e_2\rb + a\circ\pi(e_1)(f)\cdot e_2$$
for any $e_1,e_2\in\Gamma(\AV)$ and $f\in C^\infty(M)$. In fact if $e_3\in\Gamma(\AV)$,
$$\begin{array}{rl}
& \langle a\circ\pi(e_1)(f)\cdot e_2 +f\lb e_1,e_2\rb -\lb e_1,f e_2\rb, e_3\rangle\\
\text{(by (AV-\ref{AX7})) }=& \langle a\circ\pi(e_1)(f)\cdot e_2 +f\lb e_1,e_2\rb,e_3\rangle -\pi(e_1)\langle f e_2, e_3\rangle+\langle f e_2,\lb e_1,e_3\rb\rangle\\
=&a\circ\pi(e_1)(f)\langle e_2,e_3\rangle -\pi(e_1)\langle f e_2, e_3\rangle+f(\langle\lb e_1,e_2\rb,e_3\rangle+\langle  e_2,\lb e_1,e_3\rb\rangle)\\

\text{(by (AV-\ref{AX7})) }=&a\circ\pi(e_1)(f)\langle e_2,e_3\rangle -\pi(e_1)\langle f e_2, e_3\rangle+f\pi(e_1)\langle e_2,e_3\rangle\\
=&0,\\
\end{array}$$
where the last equality follows from the fact that $V$ is an $A$ module. Since this holds for all $e_3\in\Gamma(\AV)$, and $\langle\cdot,\cdot\rangle$ is non-degenerate, the statement follows.

\end{remark}

\begin{remark}

One notices that (\ref{bracketcond}) and exactness of (\ref{exactsequence}) implies that the map 
\begin{equation}\label{altcond}e\to \langle e,\cdot\rangle:\AV\to V\otimes \AV^*\end{equation} is an injection.
Consequently, if $V$ is a line bundle, it follows that $\AV\simeq V\otimes \AV^*$, and $\jpi$ must be the composition
$$\jpi: V\otimes A^*\xrightarrow{\id\otimes\pi^*}V\otimes\AV^*\simeq\AV.$$
\end{remark}

\subsection{Splitting}

We call $\phi:A\to \AV$ an isotropic splitting, if it splits the exact sequence (\ref{exactsequence}) and $\phi(A)$ is an isotropic subspace of $\AV$ with respect to the inner product.

\begin{remark}
Such splittings exist. In fact we may choose a splitting $\lambda:A\to \AV$, which is not necessarily isotropic.

Then we have a map $\gamma:A\to V\otimes A^*$ given by the composition
$$\gamma:A\xrightarrow{\lambda}\AV\xrightarrow{e\to\langle e,\cdot\rangle }V\otimes\AV^*\xrightarrow{\id\otimes\lambda^*} V\otimes A^*.$$
We let $\phi=\lambda-\frac{1}{2}\jpi\circ\gamma$. It is easy to check that $\phi$ is an isotropic splitting.

\end{remark}

If $\phi:A\to\AV$ is an isotropic splitting, then we have an isomorphism $\phi\oplus\jpi:A\oplus(V\otimes A^*)\to\AV$.

\begin{proposition}\label{mainprop}
Let $\phi:A\to\AV$ be an isotropic splitting. Then under the above isomorphism, the bracket on $A\oplus(V\otimes A^*)$ is given by
\begin{equation}\label{bracket}\lb X+\xi,Y+\eta\rbp=[X,Y]+\Lied_X\eta - \iota_Y d\xi + \iota_X\iota_Y H_\phi,\end{equation}
where $X,Y\in \Gamma(A)$, $\xi,\eta\in \Gamma(V\otimes A^*)$, and $H_\phi\in\Gamma(V\otimes\wedge^3 A^*)$, with $ d H_\phi = 0$.

Furthermore, if $\psi:A\to\AV$ is a different choice of isotropic splitting, then $\psi(X)=\phi(X)+\jpi(\iota_X\beta)$, and $H_\psi=H_\phi- d\beta$, where $\beta\in\Gamma(V\otimes\wedge^2 A^*)$.
\end{proposition}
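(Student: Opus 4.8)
The plan is to work entirely on $A \oplus (V\otimes A^*)$ via the isomorphism $\phi\oplus\jpi$, and to determine the bracket by exploiting the four axioms one at a time, much as in the classical Ševera computation. First I would record the basic structural facts that pin down most terms: the inner product, transported to $A\oplus(V\otimes A^*)$, must be $\langle X+\xi, Y+\eta\rangle = \iota_X\eta + \iota_Y\xi$, since $\phi(A)$ is isotropic and (\ref{bracketcond}) fixes the pairing of $\phi(X)$ with $\jpi(\eta)$; and $\pi$ becomes the projection to $A$. Then (AV-\ref{AX5}) forces the $A$-component of $\lb X+\xi,Y+\eta\rbp$ to be $[X,Y]$. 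So the bracket has the shape $\lb X+\xi,Y+\eta\rbp = [X,Y] + B(X+\xi, Y+\eta)$ for some $\Gamma(V\otimes A^*)$-valued expression $B$, and the task is to identify $B$.

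Next I would peel off the $V\otimes A^*$ factor using (AV-\ref{AX7}) and the derivation property from Remark~\ref{brktDer}. Pairing $\lb X+\xi, Y+\eta\rbp$ with an element $Z+\zeta$ and applying (AV-\ref{AX7}) (together with the formula $D = \jpi\circ d$ in (AV-\ref{AX6}) to handle the symmetric part) lets me compute $\langle B(X+\xi,Y+\eta), Z\rangle$ for all $Z\in\Gamma(A)$, and since the pairing with the $A$-factor is non-degenerate on the $V\otimes A^*$ summand this determines $B(X+\xi,Y+\eta)$ completely. Carrying this out should yield exactly $\Lied_X\eta - \iota_Y d\xi$ plus a term depending only on $X,Y$ — call it $H_\phi(X,Y)\in\Gamma(V\otimes A^*)$ — which, being $C^\infty(M)$-bilinear and antisymmetric in $X,Y$ (antisymmetry coming from (AV-\ref{AX6}) applied to $\phi(X)+\phi(Y)$, since $\phi$ is isotropic), defines a tensor $H_\phi\in\Gamma(V\otimes\wedge^3 A^*)$ via $H_\phi(X,Y,Z) = \langle H_\phi(X,Y), Z\rangle$; that it is genuinely a $\wedge^3$-form rather than just antisymmetric in the first two slots follows from feeding (AV-\ref{AX7}) back in. The closedness $dH_\phi = 0$ is then forced by the Jacobi-type identity (AV-\ref{AX4}): expanding $\lb \phi X,\lb \phi Y,\phi Z\rbp\rbp$ and cyclic permutations using (\ref{bracket}) and collecting the $V\otimes\wedge^3A^*$-terms gives precisely $dH_\phi$ (here the Koszul formula for $d$ on $V\otimes\wedge^\bullet A^*$ and the Bianchi-type identities for $\Lied$, $\iota$, $d$ are what make it collapse).

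For the change-of-splitting statement: if $\psi$ is another isotropic splitting, then $\psi - \phi$ lands in $\ker\pi = \jpi(V\otimes A^*)$, so $\psi(X) = \phi(X) + \jpi(\beta(X))$ for a $C^\infty(M)$-linear $\beta: A\to V\otimes A^*$, i.e. $\beta\in\Gamma(V\otimes A^*\otimes A^*)$. Isotropy of $\psi(A)$ together with the explicit inner product forces $\langle\phi X,\jpi\beta(Y)\rangle + \langle\phi Y,\jpi\beta(X)\rangle = 0$, i.e. $\iota_X\beta(Y) = -\iota_Y\beta(X)$, which says $\beta\in\Gamma(V\otimes\wedge^2 A^*)$ in the sense $\beta(X) = \iota_X\beta$. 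Then I would substitute $\psi(X) = \phi(X) + \jpi(\iota_X\beta)$ into the bracket formula (\ref{bracket}) for $\phi$, use that $\jpi(V\otimes A^*)$-sections bracket among themselves trivially and that $\lb\phi X, \jpi\eta\rbp$ has its own known form from the computation above, and collect terms: the $[X,Y]$, $\Lied_X\eta$, $\iota_Y d\xi$ pieces are unchanged, and the remaining $X,Y$-only terms rearrange — via $\Lied_X\iota_Y\beta - \iota_Y d\iota_X\beta$ and the identity $\Lied_X = d\iota_X + \iota_X d$ — into $\iota_X\iota_Y(H_\phi - d\beta)$, giving $H_\psi = H_\phi - d\beta$.

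The main obstacle I anticipate is the bookkeeping in the closedness step: showing that the residual $X,Y,Z$-dependent part of (AV-\ref{AX4}) equals $\iota_X\iota_Y\iota_Z\, dH_\phi$ requires carefully expanding a triple bracket with all the $\Lied$, $\iota$, and $d$ terms and invoking the commutation relations $[\Lied_X,\iota_Y] = \iota_{[X,Y]}$, $\Lied_X = [d,\iota_X]$, and $d^2 = 0$ in exactly the right combinations — this is the computational heart of the proof, and the place where a sign slip would be easy. Verifying that $H_\phi$ is fully antisymmetric (not merely antisymmetric in two of its three arguments) is a secondary subtlety that I would dispatch early using (AV-\ref{AX6}) and (AV-\ref{AX7}) before attempting the closedness identity.
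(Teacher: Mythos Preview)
Your proposal is correct and follows essentially the same approach as the paper's proof: the paper's appendix proves Proposition~\ref{mainprop} via a sequence of lemmas that compute $\lb e,\jpi(\xi)\rb=\jpi(\Lied_{\pi(e)}\xi)$ and $\lb\jpi(\xi),e\rb=-\jpi(\iota_{\pi(e)}d\xi)$ from (AV-\ref{AX7}) and (AV-\ref{AX6}), then defines $H_\phi(X,Y,Z)=\langle\phi(Z),\lb\phi(X),\phi(Y)\rb\rangle$ and verifies its skew-symmetry, closedness (via (AV-\ref{AX4}) exactly as you outline), and behavior under change of splitting. The only difference is organizational---the paper separates out the brackets with $\jpi(\xi)$ as standalone lemmas before treating $\lb\phi(X),\phi(Y)\rb$, whereas you propose determining the full $V\otimes A^*$-component in one pass by pairing against $Z$---but the computations and the axioms invoked at each step are the same.
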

The proof is relegated to the appendix, since it is parallel to the proof for ordinary Courant algebroids (see \cite{BC05} or \cite{SW01}).

\begin{theorem}\label{classifyingthm}
Let $A$ be a Lie algebroid and $V$ an $A$-module. Then the isomorphism classes of $AV$-Courant algebroids are in bijective correspondence with $H^3(A,V)$.
\end{theorem}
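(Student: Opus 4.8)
The plan is to use Proposition~\ref{mainprop} to reduce everything to a statement about differential forms with values in $V$. By that proposition, once we fix an isotropic splitting $\phi$ of the exact sequence \eqref{exactsequence}, an $AV$-Courant algebroid structure on $\AV$ is encoded by a closed form $H_\phi\in\Gamma(V\otimes\wedge^3A^*)$, and a change of splitting $\phi\leadsto\psi$ replaces $H_\phi$ by $H_\phi-d\beta$ for some $\beta\in\Gamma(V\otimes\wedge^2A^*)$. Thus to every $AV$-Courant algebroid we may assign a well-defined cohomology class $[H_\phi]\in H^3(A,V)$, independent of the chosen splitting. The first step is to check that this class is an invariant of the isomorphism class: if $\Phi:\AV\to\AV'$ is an isomorphism of $AV$-Courant algebroids (i.e. it intertwines $\pi,\jpi$, the bilinear forms and the brackets), then pushing an isotropic splitting of $\AV$ forward along $\Phi$ gives an isotropic splitting of $\AV'$ with the same $H$, so $[H_\phi]=[H_{\phi'}]$. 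Hence we get a well-defined map from isomorphism classes of $AV$-Courant algebroids to $H^3(A,V)$.

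For surjectivity, given any closed $H\in\Gamma(V\otimes\wedge^3A^*)$ I would take $\AV:=A\oplus(V\otimes A^*)$ with $\pi$ the projection to $A$, $\jpi$ the inclusion of $V\otimes A^*$, the tautological pairing $\langle X+\xi,Y+\eta\rangle=\iota_X\eta+\iota_Y\xi$, and the bracket \eqref{bracket} with $H_\phi$ replaced by $H$. The bulk of this step is verifying axioms (AV-\ref{AX4})--(AV-\ref{AX7}) for this model; this is a direct computation using the Cartan calculus identities $[\Lied_X,\iota_Y]=\iota_{[X,Y]}$, $\Lied_X=[d,\iota_X]$, $d^2=0$ and the fact that $\Lied$ is a Lie algebra action, entirely parallel to the classical Courant-algebroid case, and I would cite \cite{BC05,SW01} and relegate the details to the appendix alongside the proof of Proposition~\ref{mainprop}. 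This shows the map is onto, since the class of the resulting algebroid is $[H]$.

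For injectivity, suppose two $AV$-Courant algebroids $\AV$ and $\AV'$ have $[H_\phi]=[H_{\psi}']$ after choosing isotropic splittings $\phi,\psi$. Using the $B$-field freedom from Proposition~\ref{mainprop}, i.e. replacing $\psi$ by $\psi+\jpi\circ(\iota_\bullet\beta)$ for a suitable $\beta$ with $d\beta=H_\psi'-H_\phi$, we may arrange that the two structure forms are literally equal, $H_\phi=H_{\psi}'$. Then the splittings identify both $\AV$ and $\AV'$ with the same model $A\oplus(V\otimes A^*)$ carrying the bracket \eqref{bracket} with the common form, and composing these two identifications yields an isomorphism $\AV\to\AV'$ of $AV$-Courant algebroids. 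One should check that this composite does intertwine all the structure maps, which is immediate from the normal forms of $\pi,\jpi,\langle\cdot,\cdot\rangle$ and $\lb\cdot,\cdot\rb$ under the two splittings.

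The main obstacle is really bookkeeping rather than conceptual: one must be careful that the assignment $\AV\mapsto[H_\phi]$ is genuinely independent of \emph{all} choices (splitting, and the representative $\beta$) and that ``isomorphism of $AV$-Courant algebroids'' is the right notion for functoriality --- in particular that an isomorphism automatically respects $D=\jpi\circ d$ and the exact sequence. Given Proposition~\ref{mainprop}, which already packages the splitting-dependence, these checks are routine, and the only computational labor (the axiom verification for the model) is of exactly the same nature as in the classical case.
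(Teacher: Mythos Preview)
Your proposal is correct and follows essentially the same approach as the paper: construct the explicit model $A\oplus(V\otimes A^*)$ with bracket \eqref{bracket} for surjectivity, and invoke Proposition~\ref{mainprop} for the well-definedness of the class. If anything, you are more explicit than the paper about the injectivity step (arranging $H_\phi=H_\psi'$ via a $B$-field shift and then composing splittings), which the paper leaves implicit in the phrase ``defines a unique element of $H^3(A,V)$.''
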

\begin{proof}
If $H\in \Gamma(V\otimes\wedge^3 A^*)$, and $d H = 0$, then let $\AV = A\oplus(V\otimes A^*)$. We define $\langle\cdot,\cdot\rangle$ by
\begin{equation}\label{pairing}\langle X+\xi,Y+\eta\rangle =\iota_X\eta+\iota_Y\xi,\end{equation}
where $\xi,\eta\in\Gamma(V\otimes A^*)$ and $X,Y\in\Gamma(A)$. We define the bracket to be given by Equation~(\ref{bracket}). It is not difficult to check that this satisfies the axioms of an $AV$-Courant algebroid.

Conversely, by the above proposition, every $AV$-Courant algebroid defines a unique element of $H^3(A,V)$.
\end{proof}

\section{Examples}

\begin{example}
Let $M$ be a point, then  a Lie algebroid $A$ is simply a Lie algebra, and an $A$-module $V$ is a finite dimensional representation of $A$ as a Lie algebra. $H^i(A,V)$ is simply the $V$-valued Lie algebra cohomology, and $H^3(A,V)$ classifies the $AV$-Courant algebroids over a point. Note that an $AV$-Courant algebroid over a point is a Lie algebra if and only if $V$ is a trivial $A$-representation.
\end{example}

\begin{example}[Exact Courant Algebroids]
If we let  $A\simeq TM$ and $V=M\times \mathbb{R}$ be the trivial line bundle with a trivial $TM$-module structure, then we have the class of exact Courant algebroids (see \cite{CW88} or \cite{C90}) on $M$, 
$$\xymatrix@1{0\ar[r]&T^*M\ar[r]^{\pi^*} &\AV\ar[r]^\pi & TM\ar[r]&0}.$$
Theorem~\ref{classifyingthm} then corresponds to \u{S}evera's classification of exact Courant algebroids.
\end{example}

\begin{example}[$\mathcal{E}^1(M)$ Structures]\label{e1mcourant}
The bundle $\mathcal{E}^1(M)$ was introduced by A. Wade in \cite{W00}, and is uniquely associated to a given manifold $M$. Within the context of our paper, it is easiest to define $\mathcal{E}^1(M)$ by using the language of $AV$-Courant algebroids:

We let $A=TM\oplus L$, where $L\simeq \mathbb{R}$ is spanned by the abstract symbol $\frac{\partial}{\partial t}$. The bracket is given by
$$[X\oplus f\frac{\partial}{\partial t},Y\oplus g\frac{\partial}{\partial t}]_A=[X,Y]_{TM}\oplus(X(g)-Y(f))\frac{\partial}{\partial t},$$
where $X,Y\in\mathcal{X}(M)$, and $f,g\in C^\infty(M)$.

Let $V$ be the trivial line bundle spanned by the abstract symbol $e^t$, so that $\Gamma(V)=\{e^t h|h\in C^\infty(M)\}$. $V$ has an $A$-module structure (as suggested by the choice of symbols) given by
$$({X\oplus f\frac{\partial}{\partial t}})(e^t h)= e^t(X(h)+f h).$$
We let $\AV:= (TM\oplus L)\oplus(V\otimes(T^*M\oplus L^*))$,  and define a bracket on sections by Equation~(\ref{bracket}). It is clear that this data defines an $AV$-Courant algebroid on $M$. If we set $H=0$ in Equation~(\ref{bracket}), then the pair $(\AV,\lb\cdot,\cdot\rb)$ associated to $M$ is the $\mathcal{E}^1(M)$-Structure, as introduced by A. Wade in \cite{W00}.
\end{example}

\begin{example}[Equivariant $AV$-Courant Algebroids on Principal Bundles]\label{principal}
Let $\nu:P\to M$ be a $G$-principal bundle. Suppose that $A$ is a Lie algebroid over $P$ and $V$ is an $A$-module; and that there is an $AV$-Courant algebroid on $P$,
$$0\to V\otimes A^*\to\AV\to A\to 0.$$
If the action of $G$ on $P$ lifts to an action by bundle maps on $V$, $A$ and $\AV$, such that all the structures involved are $G$-equivariant. Then the quotient,
$$0\to (V\otimes A^*)/G\to\AV/G\to A/G\to 0,$$
is an $A\!/\!G\;V\!/\!G$-Courant algebroid.
\end{example}

\begin{example}\label{TPWcourant}
Let $\nu:P\to M$ be a $G$-principal bundle, and $W$ a $k$-dimensional vector space possessing a linear action of $G$. We regard $W$ as a trivial bundle over $P$, and we consider the bundle $\mathbb{T}:=TP\oplus(W\otimes T^*P)$, endowed with a $W$-valued symmetric bilinear form given by Equation~(\ref{pairing}). We also define a bracket on sections of $\mathbb{T}$ by Equation~(\ref{bracket}) where $H\in\Omega^3(P,W)^G$ is closed, then
$$0\to W\otimes T^*P\xrightarrow{\jpi} \mathbb{T} \xrightarrow{\pi} TP\to 0$$
is an equivariant $T\!P\:W$-Courant algebroid on $P$ (where $\jpi$ and $\pi$ are the obvious inclusion and projection). Thus (as in Example~\ref{principal}), we have an $AV$-Courant algebroid on $P/G$, where $A=TP/G$ is the Atiyah algebroid, and $V=P\times_G W$

Note, if $W$ is 1-dimensional, then the $T\!P\:W$-Courant algebroid given above is simply an exact Courant algebroid.

As it turns out, this is quite a general example. Indeed if $A$ is a transitive Lie algebroid, then locally all $AV$-Courant algebroids result from such a construction (See Section~\ref{princconv}).
\end{example}

\begin{remark}
In the above example, one could replace $P\times W$ with any flat bundle.
\end{remark}

\begin{example}\label{e1mprincipal}
As a special case of Example~\ref{TPWcourant}, if we take $G=\mathbb{R}$, then $P=M\times \mathbb{R}$ is a $G$-principal bundle where the action is translation. We let $W$ be the trivial line bundle over $P$ and let $t\in G$ act on $W$ by scaling by $e^t$.

To describe the $G$-action explicitly, we make the identification $\Gamma(W)=C^\infty(M\times\mathbb{R})$; and then the action is given by
\begin{equation}\label{e1maction}
\begin{array}{rcl}
\mathbb{R}\times C^\infty(M\times \mathbb{R})&\to&C^\infty(M\times \mathbb{R})\\
(t,f(x,s))&\to& e^{-t}f(x,s+t)\\
\end{array}
\end{equation}
 The quotient of the $T\!P\:W$-Courant algebroid on $P$ with $H=0$ under this action is precisely the $\mathcal{E}^1(M)$-Structure on $M=P/\mathbb{R}$.
\end{example}

\begin{example}
If $A$ is a Lie algebroid over $M$, $V$ is an $A$-module, and $\AV$ is an $AV$-Courant algebroid on the manifold $M$, and if $F\subset M$ is a leaf of the singular foliation defined by $a(A)$, then $i^*\AV$ is an $i^*\!A\:i^*\!V$-Courant algebroid on $F$, where $i:F\to M$ is the inclusion.
\end{example}

\section{$AV$-Dirac Structures}

\begin{definition}[$AV$-Dirac Structure]
Let $M$ be a manifold, $A\to M$ be a Lie algebroid over $M$, $V\to M$ an $A$-module, and $\AV$ an $AV$-Courant algebroid. Suppose that $L\subset \AV$ is a subbundle, since $\AV$ has a non-degenerate inner product, we can define $L^\perp=\{v\in\AV\mid\thickspace\langle v,u\rangle =0\thickspace\forall u\in L\}$.

We call $L$ an \textit{almost $AV$-Dirac structure} if $L^\perp=L$. An \textit{$AV$-Dirac} structure is an almost $AV$-Dirac structure, $L\subset\AV$ which is involutive with respect to the bracket $\lb,\rb$.
\end{definition}

\begin{remark}
If $L\subset\AV$ is an $AV$-Dirac structure, then $\lb e, e\rb = \frac{1}{2}D\langle e,e\rangle =0$ for any section $e\in\Gamma(L)$, so $\lb,\rb$ is skew-symmetric when restricted to $L$, and then by the other properties of the bracket, it follows that $a\circ\pi:L\to TM$ is a Lie algebroid, and $\pi:L\to A$ is a Lie algebroid morphism.
\end{remark}

\begin{example}[Invariant Dirac Structure on a Principal Bundle]\label{principalDirac}
Using the notation of Example~\ref{principal}, suppose that the $A/G\;V/G$-Courant algebroid $\AV/G$ on $M$ is the quotient of a $AV$-Courant algebroid $\AV$ on $P$. If $L\subset \AV$ is an $AV$-Dirac structure which is $G$ invariant, then it is clear that $L/G\subset \AV/G$ is an $A\!/\!G\;V\!/\!G$-Dirac structure (see Example~\ref{principal}).
\end{example}

\begin{example}[$\mathcal{E}^1(M)$-Dirac Structures]\label{e1mdirac}
Using Example~\ref{e1mcourant}, we can describe $\mathcal{E}^1(M)$, the bundle introduced by A. Wade in \cite{W00}, as an $AV$-Courant algebroid. In this context, the $\mathcal{E}^1(M)$-Dirac structures (also introduced by A. Wade in \cite{W00}) correspond directly to the $AV$-Dirac structures.
\end{example}

\section{Transitive Lie Algebroids}\label{princconv}
\subsection{Simplifying $AV$-Courant Algebroids}

Suppose that $A$ is a Lie algebroid, $V$ is an $A$-module, and $\AV$ is an $AV$-Courant algebroid over $M$ (where we use the notation given in the definition of $AV$-Courant algebroids). We will assume for the duration of this section that $M$ is connected, and we require that $A$ be a transitive Lie algebroid, namely the anchor map $a:A\to TM$ is surjective (see \cite{M05} for more details).

 Since $\AV$ may be quite complicated, we wish to examine whether this $AV$-Courant algebroid is the quotient of a much simpler $A'V'$-Courant algebroid on a principal bundle over $M$, where $A'$ is a very simple Lie algebroid and $V'$ is a very simple $A'$-module. To be more explicit, we wish to examine whether $\AV$ results from the construction in Example~\ref{TPWcourant}. For this to be true, it is clearly necessary that $A$ be the Atiyah algebroid of that principal bundle; namely if $P$ is the principal bundle, then $A=TP/G$. The existence of such a principal bundle is equivalent to the integrability of $A$ as a Lie algebroid:

\begin{proposition}
 Suppose that $A\to M$ is an integrable transitive Lie algebroid, that is to say, there exists a source-simply connected Lie groupoid $\xymatrix@1{\Gamma\ar@<.5ex>[r]^s\ar@<-.5ex>[r]_t &M}$ with Lie algebroid $A$ (see \cite{M05} for more details). Then $A$ is the Atiyah algebroid of a principal bundle.

 Conversely, if $A$ is the Atiyah algebroid of a principal bundle, then $A$ is an integrable Lie algebroid.
\end{proposition}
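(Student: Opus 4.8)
The plan is to prove the two implications separately, as they are essentially classical facts about transitive Lie algebroids which we recall here for completeness.

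First I would prove the converse direction, since it is immediate: if $A = TP/G$ is the Atiyah algebroid of a principal $G$-bundle $\nu:P\to M$, then the gauge groupoid $\Gamma = (P\times P)/G$ (with source and target the two projections, composition $[p,q]\cdot[q,r]=[p,r]$) is a Lie groupoid integrating $A$. Passing to the source-simply-connected cover of $\Gamma$ if necessary, we obtain a source-simply connected integrating groupoid, so $A$ is integrable.

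For the forward direction, suppose $\Gamma\rightrightarrows M$ is source-simply connected with Lie algebroid $A$. The idea is to fix a base point $x_0\in M$ and set $G := \Gamma_{x_0}^{x_0} = s^{-1}(x_0)\cap t^{-1}(x_0)$, the isotropy group at $x_0$; then put $P := s^{-1}(x_0)$, with $\nu := t|_P : P\to M$ and the right $G$-action given by groupoid multiplication. First I would check $P\to M$ is a (locally trivial) principal $G$-bundle: surjectivity of $\nu$ follows from transitivity of $A$ (which by connectedness of $M$ forces $\Gamma$ to be transitive, i.e. $(s,t):\Gamma\to M\times M$ surjective and in fact a submersion), freeness and transitivity of the $G$-action on fibres of $\nu$ are formal, and local triviality follows from the existence of local bisections / smoothness of $(s,t)$. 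Then I would identify $TP/G$ with $A$: the anchor-type map coming from differentiating $t$ together with the right-invariant vector fields along $s$-fibres gives a canonical isomorphism of the Atiyah sequence $0\to P\times_G\g\to TP/G\to TM\to 0$ with $0\to \ker a\to A\to TM\to 0$, where $\g = \mathrm{Lie}(G)$ is identified with $(\ker a)_{x_0}$; one checks this respects the brackets.

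The main obstacle is verifying that $G$ is in fact a Lie group (a priori $\Gamma_{x_0}^{x_0}$ is only a set-theoretic group sitting inside $\Gamma$) and that $P\to M$ is \emph{locally trivial} rather than merely a set-theoretic principal bundle — equivalently, that $(s,t):\Gamma\to M\times M$ is a surjective submersion, so that $s^{-1}(x_0)$ and $s^{-1}(x_0)\cap t^{-1}(x_0)$ are embedded submanifolds. This is where transitivity and source-simple-connectedness are really used: transitivity of $A$ plus connectedness of $M$ implies $(s,t)$ has constant maximal rank (by a standard argument propagating the rank along the source fibres using the fact that left translations are diffeomorphisms), hence is a submersion, and surjectivity then follows since its image is open, closed (source fibres are connected, being simply connected) and nonempty. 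Granting these standard structural facts about transitive Lie groupoids, the bundle $P$, the Lie group $G$ and the isomorphism $A\cong TP/G$ all fall out. I would remark that full details can be found in Mackenzie's book \cite{M05}, and keep the exposition here at the level of a sketch.
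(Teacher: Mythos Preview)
Your proposal is correct and follows essentially the same approach as the paper: in both directions you use exactly the construction the paper uses (fix $x_0$, take $P=s^{-1}(x_0)$, $G=\Gamma_{x_0}^{x_0}$ for the forward direction; gauge groupoid $(P\times P)/G$ for the converse). Your discussion of the technical point that $(s,t)$ is a surjective submersion is slightly more explicit than the paper's, which simply asserts that $t|_P$ is a surjective submersion and then argues local triviality via left translations $g:p^{-1}(x)\to p^{-1}(y)$ for $g\in\Gamma_x^y$, but the underlying argument is the same.
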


\begin{proof}
Suppose first that $A$ is integrable, then using the notation in the statement of the proposition, where $s:\Gamma\to M$ is the source map and $t:\Gamma\to M$ is the target map, let $x\in M$, let $P=\Gamma_x:=s^{-1}(x)$, and let $G=\Gamma_x^x:=s^{-1}(x)\cap t^{-1}(x)$.

Since $A$ is transitive, $t:P\to M$ is a surjective submersion, for clarity, we define $p:=t\rvert_P$. Furthermore, if $y\in M$, and $g\in\Gamma_x^y$, then $g:p^{-1}(x)\to p^{-1}(y)$ is a diffeomorphism, so $p:P\to M$ is a fibre bundle, with its fibre diffeomorphic to $G$. In addition, $G$ has a right action on $P$, given by right multiplication in the Lie groupoid. If $p^{-1}(y)=\Gamma_x^y$ is a fibre, and $g\in \Gamma_x^y$ then the diffeomorphism $g:p^{-1}(x)\to p^{-1}(y)$ is given by left groupoid multiplication while the action of $G$ on $P$ is given by right groupoid multiplication, so it is clear that the two operations commute, from which it follows that $G$ preserves the fibres of $P$, acting transitively and freely on them. Thus $P$ is a principal $G$ bundle.

Since $A$ is the Lie algebroid of $\Gamma$, it can be identified with the right invariant vector-fields on $\Gamma$ tangent to the source fibres. However, since $A$ is transitive, any two source fibres are diffeomorphic by right multiplication by some element. Thus $A$ can be identified with the $G$ invariant vector fields on $P$.

Conversely, if $A$ is the Atiyah algebroid of some principal bundle, it obviously integrates to the gauge groupoid associated to that principal bundle (see \cite{DW00} or Remark~\ref{integrabilityrmk}), and we may take $\Gamma$ to be the source-simply connected cover of the gauge groupoid.
\end{proof}

We now examine whether $V$ is an associated vector bundle.

\begin{proposition}
Suppose that $A$  is an integrable transitive Lie algebroid, and $V\to M$ is an $A$-module. Then there exists a (possibly disconnected) Lie group $G$, and a simply connected principal $G$-bundle $P\to M$ such that $V$ is the quotient bundle of $P\times \mathbb{R}^k$, for some $G$ action on $\mathbb{R}^k$. In this setting, the standard action of $\mathcal{X}(P)$ on $C^\infty(P,\mathbb{R}^k)$ induces the module structure on $V$.
\end{proposition}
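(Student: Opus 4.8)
The plan is to realize $V$ as an associated bundle by integrating the infinitesimal $A$-action on $V$ to an honest representation of a Lie group, using the principal bundle $P \to M$ produced by the previous proposition. First I would recall that an $A$-module structure on $V$ is, by the general theory (see \cite{M05}), the same as a representation of the Lie groupoid $\Gamma$ integrating $A$ on the vector bundle $V$, provided $\Gamma$ is source-simply connected; so I would take $\Gamma$ source-simply connected, obtaining a linear action of $\Gamma$ on $V$ covering its action on $M$. As in the proof of the preceding proposition, fix $x \in M$, set $P = \Gamma_x = s^{-1}(x)$ and $G = \Gamma_x^x$, so that $p = t|_P : P \to M$ is a principal $G$-bundle and $A$ is identified with the $G$-invariant vector fields on $P$. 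Restricting the groupoid representation to the isotropy group $G = \Gamma_x^x$ gives a linear $G$-action on the single fibre $V_x \cong \mathbb{R}^k$; this is the $G$-action on $\mathbb{R}^k$ we want.

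Next I would construct the identification $V \cong (P \times V_x)/G =: P \times_G \mathbb{R}^k$. The point is that for $q \in P$, i.e. $q \in \Gamma_x^y$ for $y = p(q)$, the groupoid element $q$ acts as a linear isomorphism $V_x \to V_y$. This gives a map $P \times V_x \to V$, $(q,v) \mapsto q\cdot v$, which is a surjective bundle map covering $p$, and one checks it is constant on $G$-orbits (since $(qg)\cdot v = q\cdot(g\cdot v)$) and that the induced map $P\times_G \mathbb{R}^k \to V$ is a fibrewise isomorphism, hence a vector bundle isomorphism over $M$. Because $\Gamma$ is source-simply connected and $A$ is transitive, $P$ is simply connected (the source fibres of a source-simply connected groupoid are simply connected), so $P \to M$ is a simply connected principal bundle as claimed; $G$ may of course be disconnected.

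Finally I would check the compatibility of module structures: that under the isomorphism $V \cong P\times_G \mathbb{R}^k$, the derivation action \eqref{AModule} of $\Gamma(A)$ on $\Gamma(V)$ agrees with the action induced by letting $G$-invariant vector fields on $P$ act on $G$-equivariant functions $P \to \mathbb{R}^k$ (sections of the associated bundle being exactly such equivariant functions). This is a matter of differentiating the groupoid action: the $A$-action is by definition the infinitesimalization of the $\Gamma$-action, and the $\mathcal{X}(P)^G$-action on $C^\infty(P,\mathbb{R}^k)^G$ is the infinitesimalization of the same $\Gamma$-action transported to $P$, so they coincide; invariant vector fields on $P$ are precisely what $\Gamma(A)$ becomes under the identification from the previous proposition.

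The main obstacle I expect is the first step — cleanly invoking (or, if one wants a self-contained argument, proving) that an $A$-module integrates to a representation of the source-simply connected groupoid $\Gamma$, and keeping careful track of left versus right multiplication so that the $G$-action on $P$ (right multiplication) and the $\Gamma$-action relating different fibres of $V$ (left multiplication) interact correctly to make $P \times_G \mathbb{R}^k$ well defined. Once the groupoid representation is in hand, the construction of the isomorphism and the verification of the module structures are routine, parallel to the classical Atiyah-algebroid picture for $A$ itself used in the preceding proposition.
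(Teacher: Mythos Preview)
Your proposal is correct and follows essentially the same route as the paper: integrate the $A$-module to a $\Gamma$-representation via Lie's second theorem (the paper phrases this as a groupoid morphism $\Gamma\to\GL(V)$), then use the map $\Gamma_x\times V_x\to V$, $(g,v)\mapsto g\cdot v$, to identify $V$ with the associated bundle $P\times_G V_x$, and finally differentiate to match the module structures. Your explicit remark that $P=\Gamma_x$ is simply connected because $\Gamma$ is source-simply connected is a point the paper leaves implicit; otherwise the arguments coincide.
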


\begin{proof}
Using the notation and the Lie groupoid described in the previous proposition, we consider $\Gamma_x\times V_x$, where $V_x$ is the fibre of $V$ at $x$. We may assume that $\Gamma$ is source simply connected and consequently since $V$ is an $A$-module, by Lie's second theorem there exists a Lie groupoid morphism $\Gamma\to\GL(V)$.\footnote{See, for instance, \cite{CF03}, \cite{MX00}, or \cite {MM02}, for more details. Here $\GL(V)$ is the Lie groupoid of linear isomorphisms of the fibres of $V$, namely $\GL(V)_x^y=\Hom(V_x,V_y)$.} Thus $\Gamma$ acts on $V$ and we have a map $\tilde p:\Gamma_x\times V_x\to V$ given by $(g,v)\to gv$, this is clearly a surjective submersion\footnote{Since $A$ is transitive and $M$ is connected, $t:\Gamma_x\to M$ is a surjective submersion. Let $y\in M$, and $\sigma:U\to \Gamma_x$ be a section (so that $t\circ\sigma=\id$). Then $(z,v)\to\sigma(z)(v):U\times V_x\to V_U$ is a diffeomorphism.}. Furthermore, 
$$\tilde p(g,v)=\tilde p(g',v')\Leftrightarrow g^{-1}g'\in\Gamma_x^x\text{ and }v=(g^{-1}g')v'.$$
 Thus, letting $G=\Gamma_x^x$ and $P=\Gamma_x$, we have $V\simeq (\Gamma_x\times V_x)/G\simeq (P\times V_x)/G$.

Furthermore, identifying $V_x$ with $\mathbb{R}^k$, if $X\in\mathcal{X}(P)\simeq\mathcal{X}(\Gamma_x)$, and $\sigma\in C^\infty(P,\mathbb{R}^k)$, then the standard action of $X$ on $\sigma$ is given by $X(\sigma)_z=\frac{\partial}{\partial t}\rvert_{t=0}\sigma(e^{tX}z)$ for any $z\in P\simeq\Gamma_x$. If we suppose that $X$ and $\sigma$ are $G$ invariant, then 
$$\tilde p(\frac{\partial}{\partial t}\rvert_{t=0}\sigma(e^{tX}(z)))=\frac{\partial}{\partial t}\rvert_{t=0}(e^{-tX}\tilde p(\sigma))_{p(z)}=(\Lied_X\tilde p(\sigma))_{p(z)},$$
since we defined the action of $\Gamma$ on $V$ in terms of the $A$-module structure of $V$.
\end{proof}

\begin{proposition}\label{liftprop}
Suppose that $A$ is an integrable Lie algebroid, and $V\to M$ is an $A$-module. Then $\AV$ results from the construction given in Example~\ref{TPWcourant}. Namely  there exists a Lie group $G$, and a principal $G$-bundle $P\to M$ such that $\AV$ is the quotient of a $T\!P\:\mathbb{R}^k$-Courant algebroid Furthermore, if $L\subset \AV$ is an $AV$-Dirac structure, then it is also the quotient of a corresponding  $T\!P\:\mathbb{R}^k$-Dirac structure on $P$.

Consequently, if $V$ is a line-bundle, then $\AV$ is simply the quotient of an exact Courant algebroid on $P$.
\end{proposition}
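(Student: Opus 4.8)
The plan is to run the construction of Example~\ref{TPWcourant} in reverse, using the two preceding propositions to supply the principal bundle. They give a Lie group $G$ and a principal $G$-bundle $p\colon P\to M$ with $A=TP/G$ the Atiyah algebroid and $V\simeq(P\times\mathbb{R}^k)/G$ an associated bundle, in such a way that $\Gamma(A)=\mathcal{X}(P)^G$, $\Gamma(V)=C^\infty(P,\mathbb{R}^k)^G$, and the $A$-module structure on $V$ is induced by the ordinary action of $\mathcal{X}(P)$ on $C^\infty(P,\mathbb{R}^k)$. Dualizing and taking exterior powers, $A^*\simeq T^*P/G$ and $V\otimes\wedge^i A^*\simeq(\mathbb{R}^k\otimes\wedge^i T^*P)/G$, so that $\Gamma(V\otimes\wedge^i A^*)=\Omega^i(P,\mathbb{R}^k)^G$; and because the module structure is just differentiation, the differential $d$ introduced earlier on $\Gamma(V\otimes\wedge^\bullet A^*)$ is exactly the restriction of the de Rham differential (with $\mathbb{R}^k$-coefficients) to $G$-invariant forms. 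In particular a $d$-closed element of $\Gamma(V\otimes\wedge^3 A^*)$ is the same thing as a closed, $G$-invariant $\mathbb{R}^k$-valued $3$-form on $P$, which is precisely the input data of Example~\ref{TPWcourant}.

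First I would pick an isotropic splitting $\phi\colon A\to\AV$ (these exist) and let $H_\phi\in\Gamma(V\otimes\wedge^3 A^*)$ be the closed $3$-form of Proposition~\ref{mainprop}. Regarding $H_\phi$ as a closed element of $\Omega^3(P,\mathbb{R}^k)^G$ and feeding it into Example~\ref{TPWcourant} produces a $G$-equivariant $T\!P\:\mathbb{R}^k$-Courant algebroid $\mathbb{T}=TP\oplus(\mathbb{R}^k\otimes T^*P)$ on $P$; Example~\ref{principal} then gives an $AV$-Courant algebroid $\mathbb{T}/G$ on $M$, for the same $A$ and $V$ since $TP/G=A$ and $(P\times\mathbb{R}^k)/G=V$. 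It remains to identify $\mathbb{T}/G$ with $\AV$. For this I would check that passing to $G$-invariant sections matches the short exact sequence $0\to(\mathbb{R}^k\otimes T^*P)/G\to\mathbb{T}/G\to TP/G\to 0$ with (\ref{exactsequence}), the $V$-valued pairings (both given by (\ref{pairing}) in the respective splittings), and the Courant bracket of $\mathbb{T}$ in its trivial splitting with that of $\AV$ in the splitting $\phi$: under the identifications of the previous paragraph these brackets are written by the very same instance of Equation~(\ref{bracket}) with the same $3$-form $H_\phi$, so $\mathbb{T}/G\cong\AV$ (one may quote Theorem~\ref{classifyingthm} here, but the identification is direct).

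For the Dirac statement, given an $AV$-Dirac structure $L\subset\AV$ I would use the fibrewise isomorphisms $\mathbb{T}_z\xrightarrow{\sim}(\mathbb{T}/G)_{p(z)}=\AV_{p(z)}$, valid since $G$ acts freely and transitively on the fibres of $p$, to define $\tilde L\subset\mathbb{T}$ as the fibrewise preimage of $L$; then $\tilde L$ is a $G$-invariant subbundle with $\tilde L/G=L$, and it is maximal isotropic because the pairing of $\mathbb{T}$ is the pullback of that of $\AV$. The only point needing a short argument is involutivity of $\Gamma(\tilde L)$: locally $\tilde L$ admits a frame of $G$-invariant sections, whose Courant brackets lie in $\Gamma(\tilde L)$ because they descend to Courant brackets of the corresponding sections of $L$; for arbitrary sections one expands via the Leibniz rules of Remark~\ref{brktDer} and the polarization of Axiom~(AV-\ref{AX6}), and the only potentially offending term is proportional to $\langle\cdot,\cdot\rangle$ evaluated on sections of $\tilde L$, hence vanishes by isotropy. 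Finally, if $V$ is a line bundle then $k=1$ and a $T\!P\:\mathbb{R}$-Courant algebroid is an exact Courant algebroid (as noted in Example~\ref{TPWcourant}), which gives the last sentence. The chief obstacle is not any individual step but the bookkeeping: one must check that the several identifications of invariant or associated bundles over $M$ with bundles over $P$ are simultaneously compatible with the anchor, with $\jpi$ and $\pi$, with the pairing, the bracket and the differential; once $\Gamma(A)=\mathcal{X}(P)^G$, $\Gamma(V)=C^\infty(P,\mathbb{R}^k)^G$ and the description of $d$ are in place, each remaining verification is routine.
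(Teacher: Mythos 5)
Your proposal is correct and follows essentially the same route as the paper: choose an isotropic splitting, use the preceding propositions to identify $\Gamma(V\otimes\wedge^\bullet A^*)$ with $\Omega^\bullet(P,\mathbb{R}^k)^G$ so that $H_\phi$ becomes a closed invariant form feeding Example~\ref{TPWcourant}, identify $\AV$ with the quotient (equivalently $\mathbb{T}=\nu^*\AV$), and take $\tilde L=\nu^*L$ for the Dirac statement. Your extra care with the bracket identification and with involutivity of $\tilde L$ (via invariant frames, the Leibniz rule of Remark~\ref{brktDer}, and isotropy killing the pairing term) just fills in details the paper leaves as ``it is clear.''
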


\begin{proof}
We choose some isotropic splitting of $\AV$, so that
$$\AV\simeq A\oplus(V\otimes A^*),$$
 the bracket is given by Equation~(\ref{bracket}), and the symmetric bilinear form by Equation~(\ref{pairing}). Then we can use the previous propositions to lift the right hand side to a principal bundle:

By the above propositions, there exists a (possibly disconnected) Lie group $G$, and a simply connected $G$-principal bundle, $\nu:P\to M$, such that $A\simeq TP/G$, and in addition to this there is a $G$-action on $W:=\mathbb{R}^{\dim(V)}$, say $\lambda:G\to \GL(W)$ such that $V=P\times_G W$. In this setting, $\Gamma(V\otimes\wedge^i A^*)\simeq\Omega^i(P,W)^G$, and $d:\Gamma(V\otimes\wedge^i A^*)\to\Gamma(V\otimes\wedge^{i+1}A^*)$ is the restriction of the exterior derivative $d$ to $\Omega^*(P,W)^G$.

Thus since $H\in\Gamma(V\otimes\wedge^3 A^*)\simeq\Omega^3(P,W)^G$, it is clear that we may view $H$ as a $G$-invariant element of $\Omega^3(P,W)$, and define the $T\!P\:W$-Courant algebroid $W\otimes T^*P\to\mathbb{T}\to TP$ in terms of it: Namely, $\mathbb{T}\simeq TP\oplus(W\otimes T^*P)$ endowed with a $W$-valued symmetric bilinear form given by Equation~(\ref{pairing}), and the bracket given by Equation~(\ref{bracket}). (See Example~\ref{TPWcourant} for more details on this construction.)

It is clear that $\AV$ is the quotient of this $T\!P\:W$-Courant algebroid.

Equivalently, it is easy to see that $TP=\nu^*A$, $W=\nu^*V$ and $\mathbb{T}=\nu^*\AV$. The $W$-valued symmetric bilinear form on $\mathbb{T}$ is simply the pullback of the $V$-valued symmetric bilinear form on $\AV$, and if $e_1,e_2\in\Gamma(\AV)$, then $\lb \nu^*e_1,\nu^*e_2 \rb=\nu^*\lb e_1,e_2 \rb$, and the bracket on $\mathbb{T}$ is then extended to arbitrary sections of $\mathbb{T}$ by Axioms~(AV-\ref{AX6})~and Remark~\ref{brktDer}.

Next, let $\tilde L=\nu^*(L)\subset\mathbb{T}$. It is obvious that $L^\perp=L\Rightarrow\tilde L^\perp=\tilde L$; and similarly since $L$ is involutive, so is $\tilde L$.

Thus $\tilde L\subset \mathbb{T}$ is a $T\!P\:W$-Dirac structure, and ${\tilde L}/G=L$.
\end{proof}

\begin{example}
If $A=TM$ and $V$ is a flat vector bundle over $M$, then following the proof of Proposition~\ref{liftprop} we see that $G=\pi_1(M)$ is the fundamental group, and $P=\tilde M$ is the simply connected covering space of $M$ over which the pullback of $V$ is a trivial vector bundle.
\end{example}

\begin{remark}\label{integrabilityrmk}
The above propositions construct the principal bundle $P$, and the Lie group $G$. Suppose however, that we already have a Lie group $G'$, and a connected $G'$-principal bundle $\nu':P'\to M$ such that $A\simeq TP'/G'$. It will not be difficult to see that $\AV$ is the quotient of a $AV$-Courant algebroid on $P'$.

Let $\mathcal{G}=(P'\times P')/G'$, where we take the quotient by the diagonal action. Then 
$$\xymatrix{\mathcal{G}\ar@<.5ex>[r]^{s}\ar@<-.5ex>[r]_{t} &M}$$
 is a Lie groupoid with Lie algebroid $A$, where the source map is $s:[u,v]\to \nu'(v)$, the target map is $t:[u,v]\to \nu'(u)$, and the multiplication is $[u,v]\cdot[v,w]=[u,w]$.\footnote{An element of $\mathcal{G}$ is an equivalence class, which we may view as a subset of $\nu'^{-1}(y)\times \nu'^{-1}(z)$ which is $G$ invariant. As such, we may view it as the graph of an equivariant diffeomorphism $\nu'^{-1}(y)\to \nu'^{-1}(z)$. The multiplication in $\mathcal{G}$ is simply the composition of these diffeomorphisms. See \cite{DW00} for details} Hence by Lie's second theorem, (see \cite{CF03}, \cite{MX00},  or \cite{MM02}, for instance for more details) since $\Gamma$, the Lie groupoid used in the proof of Proposition~\ref{liftprop}, is source simply connected, there is a unique Lie groupoid morphism $\Phi:\Gamma\to \mathcal{G}$ which restricts to the identity map on the Lie algebroid $A$.

It follows that $\Phi\rvert_{P}:P\to P'$ is a covering map,\footnote{Here we use the identifications $P=\Gamma_x$ and $P'=\mathcal{G}_x$. It is a covering map since the right invariant vector fields, which are identified with the sections of $A$, span the tangent space of the source fibres.} and $\Phi\rvert_{G}:G\to G'$ is a covering morphism of Lie groups.\footnote{Here we use the identifications $G=\Gamma_x^x$ and $G'=\mathcal{G}_x^x$.} It is easy to see that $H=\ker(\Phi\rvert_{G})\simeq \pi(P')$, and $P'=P/H$.

Thus, we may take the quotient of the $T\!P\:W$-Courant algebroid on $P$ (constructed in Proposition~\ref{liftprop}) by $H$, to form a $T\!P'\;W\!/\!H$-Courant algebroid on $P'$ whose quotient by $G'$ is $\AV$. It is important to note that while $W$ is a trivial vector bundle, $W/H$ is a flat vector bundle.
\end{remark}

\begin{corollary}\label{localint}
Suppose that $V$ is an $A$-module, and $M$ is contractible, then $\AV$ is the quotient of a $T\!P\:\mathbb{R}^k$-Courant algebroid $\mathbb{R}^k\otimes T^*P\to\mathbb{T}\to TP$ on some principal $G$-bundle, $P$. (See Example~\ref{principal}). Furthermore, if $L\subset \AV$ is an $AV$-Dirac structure, then it is also the quotient of a $T\!P\:\mathbb{R}^k$-Dirac structure $\tilde L\subset \mathbb{T}$.
\end{corollary}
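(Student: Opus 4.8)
The plan is to reduce this to Proposition~\ref{liftprop} by checking that its hypotheses hold when $M$ is contractible. The only things that need verification are: (1) $A$ is a transitive Lie algebroid; (2) $A$ is integrable; and (3) the conclusion about $AV$-Dirac structures transfers verbatim. Points (1) and (3) are either immediate or already contained in the statement of Proposition~\ref{liftprop}, so the real content is establishing integrability of $A$ over a contractible base.

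First I would recall that for a transitive Lie algebroid $A\to M$, the obstruction to integrability is Crainic–Fernandes' monodromy obstruction living in $H^2(M)$ (more precisely, the image of a map from $\pi_2$ of the base into the center of the isotropy Lie algebra $\g_x=\ker(a_x)$). Since $M$ is contractible, $\pi_2(M)=0$ and $H^2(M;\mathbb{R})=0$, so all such obstructions vanish and $A$ is integrable. Alternatively, and perhaps more in the spirit of this paper, one can argue directly: over a contractible $M$ every transitive Lie algebroid is isomorphic to $TM\oplus(M\times\g)$ with a bracket twisted by a connection and a curvature term, and such an algebroid is manifestly the Atiyah algebroid of the principal bundle $P=M\times\tilde G$ (where $\tilde G$ is the simply connected Lie group with Lie algebra $\g$), hence integrable. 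Either way, the hypothesis ``$A$ integrable transitive Lie algebroid'' of Proposition~\ref{liftprop} is satisfied.

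Granting integrability, Proposition~\ref{liftprop} applies directly: there is a Lie group $G$ and a principal $G$-bundle $P\to M$ with $A\simeq TP/G$, $V=P\times_G\mathbb{R}^k$, and $\AV$ is the quotient of a $T\!P\;\mathbb{R}^k$-Courant algebroid $\mathbb{R}^k\otimes T^*P\to\mathbb{T}\to TP$ on $P$; moreover any $AV$-Dirac structure $L\subset\AV$ lifts to a $T\!P\;\mathbb{R}^k$-Dirac structure $\tilde L=\nu^*(L)\subset\mathbb{T}$ with $\tilde L/G=L$. This is exactly the assertion of the corollary, so nothing further is required once integrability is in hand.

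The main obstacle is thus entirely the integrability step — convincing oneself that contractibility of $M$ kills the Crainic–Fernandes obstruction, or equivalently writing down the explicit local normal form of a transitive Lie algebroid and recognizing it as a gauge algebroid. This is a standard fact but it is the one place where contractibility (as opposed to, say, mere connectedness) is genuinely used; everything else is a citation of Proposition~\ref{liftprop}.
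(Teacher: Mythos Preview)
Your proposal is correct and follows essentially the same approach as the paper: reduce to Proposition~\ref{liftprop} by establishing that a transitive Lie algebroid over a contractible base is integrable, then invoke that proposition verbatim. The paper's own proof is a single sentence citing \cite{M05} for the integrability fact, whereas you sketch two arguments (Crainic--Fernandes monodromy via $\pi_2(M)=0$, or the explicit trivialization $A\cong TM\oplus(M\times\g)$); both are valid justifications of the same step.
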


\begin{proof}Every transitive Lie algebroid is integrable over a contractible space, see \cite{M05} for details.\end{proof}

\subsection{Contact Manifolds}

Iglesias and Wade show how to describe contact manifolds as $\mathcal{E}^1(M)$-Dirac Structures in \cite{IW05}. Thus in light of Example~\ref{e1mdirac}, we can describe them as $AV$-Dirac structures. We will now describe this same construction from a more geometric perspective, similar to their description in \cite{IW06}:

To simplify things, we assume that $(M,\xi)$ is a co-oriented contact manifold, and we use the fact that there is a one-to-one correspondence between co-oriented contact manifolds and symplectic cones; namely, $(N,\omega_N)$ is a symplectic cone, where $N=\Ann^+(\xi)\subset T^*M$, $\omega_N$ is the two form induced from the standard symplectic form on $T^*M$ and the action is scalar multiplication along the fibres.

Now $X\to\iota_X\omega_N:TN\to T^*N$ defines an isomorphism. We let $L\subset TN\oplus T^*N$ be the graph of this morphism; and it is easy to check that $L$ is a maximal isotropic subbundle of $TN\oplus T^*N$, and since $\omega_N$ is closed, $L$ is a Dirac subbundle of the standard Courant algebroid on $N$. To summarize, $(M,\xi)$ is a contact manifold if and only if $L$ is the graph of an isomorphism, or simply $L\cap TN=0$.

We may make the identifications $N\simeq M\times\mathbb{R}^+$ and $x,t\to x,\ln(t): M\times\mathbb{R}^+\simeq M\times\mathbb{R}$; consequently, as described in Example~\ref{e1mprincipal}, the quotient of the standard Courant algebroid on $N=M\times\mathbb{R}$ by the $\mathbb{R}$ action (\ref{e1maction}) yields an $\mathcal{E}^1(M)$ bundle on $M$; or an $AV$-Courant algebroid where $A=TN/\mathbb{R}$ and $V$ is the trivial line bundle on $M$.

Since $\omega_N$ is preserved by this action, it follows that its graph, $L$, is $\mathbb{R}$-invariant and defines an $\mathcal{E}^1(M)$-Dirac structure which we denote by $\tilde{L}_\xi$. It is perhaps important to note that $\tilde{L}_\xi$ is defined intrinsically. We may conclude that:

\begin{proposition}
 $(M,\xi)$ is a contact manifold if and only if $\tilde{L}_\xi\cap A=0$ (under the canonical splitting).
\end{proposition}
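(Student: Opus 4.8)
The plan is to push the entire question down from $M$ to the symplectization $N$, where it reduces to the elementary observation that $\omega_N$ is nondegenerate exactly when its graph meets $TN$ trivially.

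First I would recall, from Example~\ref{e1mprincipal}, that the $\mathcal{E}^1(M)$-Courant algebroid $\AV$ in play (with $A=TN/\mathbb{R}$, $V$ the trivial line bundle on $M$, and $H=0$) is the $\mathbb{R}$-quotient of the standard Courant algebroid $TN\oplus T^*N$ on $N$; equivalently, writing $\nu:N\to M$ for the quotient submersion, $\nu^*\AV=TN\oplus T^*N$. Under this identification the tautological splitting $TN\oplus 0$ is $\mathbb{R}$-invariant — the $e^t$-twist in the action (\ref{e1maction}) only affects the cotangent summand — so it descends to an isotropic splitting of $0\to V\otimes A^*\to\AV\to A\to 0$, and that descent is precisely the canonical ($H=0$) splitting $\phi:A\to\AV$. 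Hence $\nu^*\phi(A)=TN$, the first summand of $TN\oplus T^*N$. Since $\tilde L_\xi$ was by definition the descent $L/\mathbb{R}$, we also get $\nu^*\tilde L_\xi=L$.

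Now $\nu$ is a surjective submersion and pullback along it is fibrewise an isomorphism, so a subbundle of $\AV$ is everywhere zero if and only if its $\nu$-pullback is, and pullback commutes with pointwise intersection. Therefore
\[
\tilde L_\xi\cap\phi(A)=0
\iff
\nu^*\tilde L_\xi\cap\nu^*\phi(A)= L\cap TN=0 .
\]
Since $L$ is the graph of the bundle map $X\mapsto\iota_X\omega_N:TN\to T^*N$, we have $L\cap TN=\ker(X\mapsto\iota_X\omega_N)$, which vanishes exactly when $\omega_N$ is nondegenerate. By the correspondence between co-oriented contact manifolds and symplectic cones recalled above — indeed, by the chain of equivalences already stated just before the proposition (``$(M,\xi)$ is a contact manifold if and only if $L$ is the graph of an isomorphism, or simply $L\cap TN=0$'') — nondegeneracy of $\omega_N$ holds if and only if $(M,\xi)$ is a contact manifold. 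Combining the two displayed equivalences proves the claim.

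The step I expect to need the most care is the identification $\nu^*\phi(A)=TN$: one must confirm that ``the canonical splitting'' is the $H=0$ splitting and verify that the $\mathbb{R}$-quotient of Example~\ref{e1mprincipal} carries the tangent splitting $TN\subset TN\oplus T^*N$ to $\phi(A)\subset\AV$ untwisted. The subtlety is that the $e^t$-factor in (\ref{e1maction}), which is exactly what makes $L=\{X+\iota_X\omega_N\}$ $\mathbb{R}$-invariant in the first place, acts on the cotangent part but not on the tangent part, so the tangent splitting genuinely descends. Everything else is bookkeeping about pullbacks of subbundles together with the standard linear-algebra fact about the graph of a two-form.
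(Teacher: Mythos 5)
Your proof is correct and follows essentially the same route as the paper, which states the proposition as the conclusion of the preceding discussion: the equivalence of the contact condition with $L\cap TN=0$ on the symplectization, combined with the fact that $L$ and the tangent splitting $TN$ descend under the $\mathbb{R}$-action of Example~\ref{e1mprincipal} to $\tilde L_\xi$ and the canonical splitting $A\subset\AV$. Your extra care about the twisted action only affecting the cotangent summand, and about pullback commuting with fibrewise intersection, just makes explicit what the paper leaves implicit.
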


\section{CR-structures and Courant Algebroids}\label{CRstructures}

Suppose $M$ is a smooth manifold, let $H\subset TM$ be a subbundle and suppose $J\in\Gamma(\Hom(H,H))$ is such that $J^2=-\id$. Then $(H,J)$ is called an almost CR structure. We let $H_{1,0}\subset \mathbb{C}\otimes H\subset \mathbb{C}\otimes TM$ denote the $+i$-eigenbundle of $J$, if $H_{1,0}$ is involutive, then it is called a CR-structure. It is possible to describe this as a Courant algebroid:

 We consider the bundle $H^*\oplus H\simeq  T^*M\oplus H/\Ann(H)$, and the bundle map $\mathbb{J}:=-J^*\oplus J\in\Gamma(\Hom(H^*\oplus H,H^*\oplus H^*))$. It is clear that $\mathbb{J}^2=-\id$. Let $L=\ker(\mathbb{J}-i)\oplus\Ann(H)\subset \mathbb{C}\otimes(TM\oplus T^*M)$.

\begin{proposition}
$L$ is involutive under the standard Courant bracket if and only if $J$ defines a CR structure.
\end{proposition}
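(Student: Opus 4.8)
The plan is to make the subbundle $L$ completely explicit, recognize it as the standard almost Dirac structure $D\oplus\Ann(D)$ attached to a (complex) distribution, and then run the usual Courant–integrability computation, which will translate verbatim into involutivity of $H_{1,0}$. Throughout we work with the standard Courant algebroid, i.e.\ $A=TM$, $V=\mathbb{R}$, $H=0$, so the relevant formulas are (\ref{bracket}) and (\ref{pairing}).

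\emph{Identifying $L$.} Write $\mathbb{C}\otimes H=H_{1,0}\oplus H_{0,1}$ for the $\pm i$-eigenbundle decomposition of $J$, and compute the eigenbundles of $\mathbb{J}=-J^*\oplus J$ on $\mathbb{C}\otimes(H^*\oplus H)$: the condition $JX=iX$ gives $X\in H_{1,0}$, while the condition $J^*\mu=-i\mu$ on $\mu\in\mathbb{C}\otimes H^*$, evaluated separately on $(1,0)$- and $(0,1)$-vectors, forces $\mu|_{H_{1,0}}=0$. Hence the $+i$-eigenbundle of $\mathbb{J}$ is $N\oplus H_{1,0}$ with $N=\ker(J^*+i)=\Ann_{H^*}(H_{1,0})$. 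Now transport this up to $\mathbb{C}\otimes\TTM$ using $H\hookrightarrow TM$ and $H^*\simeq T^*M/\Ann(H)$ — equivalently $\Ann(H)^\perp=H\oplus T^*M$ and $H^*\oplus H\simeq\Ann(H)^\perp/\Ann(H)$ inside $\TTM$ — and adjoin $\Ann(H)$; since the preimage of $N$ in $\mathbb{C}\otimes T^*M$ is exactly $\{\alpha\mid\alpha|_{H_{1,0}}=0\}$, this yields
$$L \;=\; H_{1,0}\ \oplus\ \Ann(H_{1,0})\ \subset\ \mathbb{C}\otimes\TTM .$$
Then $L=L^\perp$: for $X,Y\in H_{1,0}$ and $\alpha,\beta\in\Ann(H_{1,0})$ the pairing (\ref{pairing}) gives $\langle X+\alpha,Y+\beta\rangle=\beta(X)+\alpha(Y)=0$, and $\dim_{\mathbb{C}}H_{1,0}+\dim_{\mathbb{C}}\Ann(H_{1,0})=\dim M$ gives maximality. (This also re-derives the stated maximal isotropy, and shows that the sign $-J^*\oplus J$, rather than $J^*\oplus J$, is precisely what makes $L$ isotropic.)

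\emph{Involutivity.} Because $L$ is isotropic, the Leibniz rule for (\ref{bracket}) simplifies on $\Gamma(L)$ to $\lb fe_1,e_2\rb=f\lb e_1,e_2\rb-(a\circ\pi(e_2)f)\,e_1$ and $\lb e_1,fe_2\rb=f\lb e_1,e_2\rb+(a\circ\pi(e_1)f)\,e_2$ (the $\langle e_1,e_2\rangle\,df$ correction drops), so $\lb\Gamma(L),\Gamma(L)\rb\subseteq\Gamma(L)$ need only be checked on local sections of the two obvious types, $e=X$ with $X\in\Gamma(H_{1,0})$ and $e=\xi$ with $\xi\in\Gamma(\Ann(H_{1,0}))$; on such sections (\ref{bracket}) agrees with its skew-symmetrization, since the two differ by $\tfrac12 D\langle e_1,e_2\rangle=0$ by (AV-\ref{AX6}), so it is immaterial which ``standard Courant bracket'' is meant. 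With $H=0$ in (\ref{bracket}): $\lb\xi,\eta\rb=0$ always; $\lb X,Y\rb=[X,Y]$, whose covector part is $0$ and whose vector part lies in $\Gamma(H_{1,0})$ iff $[X,Y]$ does; and for $Z\in\Gamma(H_{1,0})$ one has $(\Lied_X\eta)(Z)=-\eta([X,Z])$ and $(\iota_Yd\xi)(Z)=-\xi([Y,Z])$, so the covector parts of $\lb X,\eta\rb=\Lied_X\eta$ and $\lb\xi,Y\rb=-\iota_Yd\xi$ annihilate $H_{1,0}$ as soon as $H_{1,0}$ is closed under $[\cdot,\cdot]$. Therefore $L$ is involutive iff $[\Gamma(H_{1,0}),\Gamma(H_{1,0})]\subseteq\Gamma(H_{1,0})$, i.e.\ iff $H_{1,0}$ is a CR structure.

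The computation itself is routine; the only step demanding care is the first one — correctly transporting $\ker(\mathbb{J}-i)$ from $\mathbb{C}\otimes(H^*\oplus H)$ up to $\mathbb{C}\otimes\TTM$ through the quotient $T^*M\twoheadrightarrow H^*$, and identifying $N$ as the annihilator of $H_{1,0}$ (not of $H_{0,1}$), since that is exactly what makes $L$ come out maximal isotropic and ties its covector part to $H_{1,0}$. Once $L=H_{1,0}\oplus\Ann(H_{1,0})$ is established, the rest is the familiar fact that the Dirac structure $D\oplus\Ann(D)$ of a distribution is Courant-involutive precisely when $D$ is Lie-involutive, applied here to $D=H_{1,0}$.
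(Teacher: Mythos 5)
Your proof is correct and follows essentially the same route as the paper: both identify $L=H_{1,0}\oplus\Ann(H_{1,0})$, obtain the forward implication from the fact that the bracket of sections of $H_{1,0}$ has vector part $[X,Y]$, and prove the converse by showing the one-form parts of the brackets remain in $\Ann(H_{1,0})$. The only cosmetic difference is that the paper phrases the converse via the differential ideal generated by $\Ann(H_{1,0})$ (writing $d\xi=\sum_i\beta_i\wedge\alpha_i$ locally), whereas you evaluate $\Lied_X\eta$ and $\iota_Y d\xi$ on sections of $H_{1,0}$ directly with Cartan's formula --- the same computation in different clothing.
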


\begin{proof}
We notice that $L=H_{1,0}\oplus\Ann(H_{1,0})$. Therefore $L$ is involutive under the Courant bracket only if $\pi(L)=H_{1,0}$ is involutive, where $\pi:TM\oplus T^*M\to TM$ is the projection. Thus $J$ defines a CR structure.

Conversely, suppose that $H_{1,0}$ is involutive. Then if $I$ is the ideal generated by $\Ann(H_{1,0})$ in $\Gamma(\mathbb{C}\otimes\wedge T^*M)$, then $I$ is closed under the differential: $dI\subset I$.

In particular, if we restrict our attention to a local neighborhood on $M$, and  $\alpha_i$ is a local basis for $\Ann(H_{1,0})$ and $\xi\in \Gamma(\Ann(H_{1,0}))$, then $d\xi=\sum_i \beta_i\wedge\alpha_i$ for some $\beta_i\in\Omega^1(M,\mathbb{C})$. Thus, for any $X\in \Gamma(H_{1,0})$, we have,
$$\iota_Xd\xi=\sum_i \beta_i(X)\alpha_i\in \Gamma(\Ann(H_{1,0})),$$
and
$$\Lied_X\xi=d\iota_X\xi+\iota_Xd\xi=\iota_Xd\xi\in \Gamma(\Ann(H_{1,0})).$$
It follows that $L$ is involutive under the standard Courant bracket.

\end{proof}

In the next section we shall generalize this construction.

\section{Generalized CR structures}

Suppose that $M$ is a manifold, $A$ is a Lie algebroid over $M$, $V$ an $A$-module of rank one over $M$, and $\AV$ an $AV$-Courant algebroid over $M$. Suppose further that $A$ has some distinguished subbundle $H\subset A$, and consider the bundle given by
$$\mathbb{H}=q(\pi^{-1}(H)), \text{ where } q:\pi^{-1}(H)\to \pi^{-1}(H)/\jpi(V\otimes\Ann(H)).$$
Then the pairing on $\AV$ restricts non-degenerately to $\mathbb{H}$, and we have an exact sequence 
$$0\to V\otimes H^*\xrightarrow{\jpi}\mathbb{H}\xrightarrow{\pi}H\to 0.$$
\begin{definition}
$\mathbb{J}\in\Gamma(\Hom(\mathbb{H},\mathbb{H}))$ is called a generalized CR structure if:

\begin{enumerate}
\item $\mathbb{J}$ is orthogonal (preserves the pairing on $\mathbb{H}$)
\item $\mathbb{J}^2=-1$
\item $L:=q^{-1}(\ker(\mathbb{J}-i))\subset \mathbb{C}\otimes \AV$ is involutive.
\end{enumerate}
\end{definition}

\begin{remark}
$L:=q^{-1}(\ker(\mathbb{J}-i))\subset \mathbb{C}\otimes \AV$ is a maximal isotropic subspace of $\AV$ since $\ker(\mathbb{J}-i)$ is a maximal isotropic subspace of $\mathbb{H}$. In particular, since we assume that $L$ is involutive, it is an $AV$-Dirac structure.
\end{remark}

\begin{remark}
Here we have relaxed the requirement $L\cap \bar L=0$ in the definition of a generalized complex structure. While we have allowed $L\cap \bar L$ to be non-trivial, it must lie in $j(V\otimes \Ann(H))\subset V\otimes A^*$. As pointed out in Remark~\ref{brktDer}, this can be interpreted as saying that $L\cap \bar L$ only fails to be trivial up to an `infinitesimal'. On the other hand, we still require $L$ to be an $AV$-Dirac structure.

This is in contrast to the approach taken by generalized CRF structures, introduced by Izu Vaisman in \cite{V08}, which requires $L\cap \bar L=0$, but does not require $L$ to be a Dirac structure.
\end{remark}

 It is well known that one can canonically associate a Poisson structure to every generalized complex structure. The analogue for generalized CR structures is to endow $V\otimes A^*$ with a non-trivial Lie algebroid structure, which we shall do in a canonical fashion following the corresponding argument given for generalized complex structures in \cite{G07}.

We have an inclusion $i:H\to A$, and consequently a map $\mathbb{J}\circ \jpi\circ (\id\otimes i^*):V\otimes A^*\to \mathbb{H}$, which (abusing notation), we shall simply call $\mathbb{J}$. We consider the family of subspaces of $\AV$ given by 
$$D_t:=e^{t\mathbb{J}}(V\otimes A^*)+ V\otimes\Ann(H)=q^{-1}(e^{t\mathbb{J}}(V\otimes H^*)).$$
 Since $e^{t\mathbb{J}}=\cos(t)+\sin(t)\mathbb{J}:\mathbb{H}\to \mathbb{H}$ is orthogonal, and $\jpi(V\otimes H^*)$ is a lagrangian subspace of $\mathbb{H}$, it follows that $D_t$ is lagrangian for each $t$.

The following proposition is a slight generalization of a result of Gualtieri \cite{G07}.

\begin{proposition}(Gualtieri)
The family $D_t$ of almost $AV$-Dirac structures is integrable for all $t$.
\end{proposition}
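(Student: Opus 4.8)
The plan is to mimic Gualtieri's argument for generalized complex structures, working at the level of the pulled-back exact Courant algebroid and transporting everything back down. First I would reduce to the case where $\AV$ is split, so that $\AV\simeq A\oplus(V\otimes A^*)$ with bracket given by Equation~(\ref{bracket}) and pairing by Equation~(\ref{pairing}); integrability of a family of almost $AV$-Dirac structures is a pointwise-plus-bracket condition, so it suffices to check it locally, and by Proposition~\ref{liftprop} (or Corollary~\ref{localint}) we may even pull back to a $T\!P\:\mathbb{R}^k$-Courant algebroid on a principal bundle, where $V$ becomes the trivial line bundle $W=\mathbb{R}$ and the whole structure is an honest (equivariant, twisted) exact Courant algebroid. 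Since $L=q^{-1}(\ker(\mathbb{J}-i))$ is already assumed involutive and since $D_0=V\otimes A^*$ pulls back to $T^*P$, which is manifestly involutive, the remaining content is to interpolate.

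The key step is to realize $D_t$ as the image of $D_0$ (or of $L$, whichever is more convenient) under a flow of Courant-algebroid automorphisms. On the exact model, $e^{t\mathbb{J}}$ restricted to $\mathbb{H}$ is orthogonal, and one can lift it to an orthogonal bundle map of the full Courant algebroid covering the identity on $A$; the point is that such orthogonal maps are parametrized, up to the $B$-field action, by $V\otimes\wedge^2A^*$-valued data, and differentiating the family $e^{t\mathbb{J}}$ produces an infinitesimal generator which, because $\mathbb{J}$ is built from the $AV$-Dirac structure $L$ and $d L_{\textrm{data}}$ is controlled by the integrability of $L$, is a \emph{closed} two-form (valued in $V$, or in $W$ upstairs). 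Concretely: write $\mathbb{J}$ in the splitting as a matrix, extract the associated $\beta_t\in\Gamma(V\otimes\wedge^2A^*)$, and show $\tfrac{d}{dt}$ of the bracket-obstruction for $D_t$ equals $\iota_X\iota_Y\iota_Z\,d\beta_t$ up to terms that vanish on $D_t$; then invoke that the relevant form is closed. Equivalently, and more cleanly, one shows directly that $e^{t\mathbb{J}}$ is realized by a composition of a $B$-transform by a closed form and the flow of the symmetry generated by $L$, each of which preserves involutivity, using that the $B$-transform by $\beta$ with $d\beta=0$ is a Courant automorphism (this is exactly the content of the second half of Proposition~\ref{mainprop}: $H_\psi=H_\phi-d\beta$).

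I would organize it as: (1) reduce to the split/pulled-back exact case as above; (2) note $D_0$ and $L$ are both $AV$-Dirac, hence involutive, by the hypothesis and by the model; (3) show that for each $t$ there is a Courant-algebroid automorphism $\Psi_t$ with $\Psi_t(D_0)=D_t$, built from $e^{t\mathbb{J}}$ together with a closed $B$-field correction, using orthogonality of $e^{t\mathbb{J}}$ on $\mathbb{H}$ and the fact that $\jpi(V\otimes\Ann(H))$ is fixed; (4) conclude, since automorphisms send involutive subbundles to involutive subbundles, that every $D_t$ is integrable; (5) descend: the original $D_t$ on $M$ is the $G$-quotient of the upstairs integrable family, and quotients of involutive subbundles by equivariant automorphisms are involutive (as in Example~\ref{principalDirac}).

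The main obstacle I expect is step (3): verifying that $e^{t\mathbb{J}}$, which a priori is only defined and orthogonal on the \emph{subbundle} $\mathbb{H}=\pi^{-1}(H)/\jpi(V\otimes\Ann(H))$, extends to a global Courant automorphism of $\AV$ whose deformation generator is \emph{exact} (a $d$ of a $V$-valued two-form), rather than merely closed — and tracking the twisting $3$-form $H_\phi$ correctly through the identification $D_t=q^{-1}(e^{t\mathbb{J}}(V\otimes H^*))$. This is precisely where the involutivity of $L$ (equivalently $d$-closedness of the Dirac data of $L$) must be fed in, and where one must be careful that the $V$-valued, rank-one nature of the pairing makes the algebra behave as in the classical case; everything else is bookkeeping parallel to \cite{G07}.
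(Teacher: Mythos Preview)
Your central step (3) does not go through: $e^{t\mathbb{J}}$ is orthogonal for the pairing but is \emph{not} a Courant-algebroid automorphism, and no closed $B$-field correction can repair this. A $B$-transform $X+\xi\mapsto X+\xi+\iota_X B$ fixes $D_0=V\otimes A^*$ pointwise, and more generally every Courant automorphism of an exact Courant algebroid covering $\id_M$ is a closed $B$-transform; hence any map of the form you propose sends $D_0$ back to $D_0$, never to $D_t$ for $t\neq 0$. For a concrete witness, take the standard exact Courant algebroid with a symplectic generalized complex structure, so $\mathbb{J}=\begin{pmatrix}0&-\omega^{-1}\\ \omega&0\end{pmatrix}$: then $e^{t\mathbb{J}}$ carries $T^*M$ to the graph of $\tan(t)\,\omega^{-1}$, which is genuinely not $T^*M$, and $e^{t\mathbb{J}}$ is not a $B$-transform. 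The $D_t$ are still Dirac in this example (graphs of Poisson bivectors), but that is precisely the conclusion to be proved, not a consequence of any automorphism property of $e^{t\mathbb{J}}$. (A secondary issue: your reduction via Proposition~\ref{liftprop} or Corollary~\ref{localint} requires $A$ to be transitive and integrable, which is not assumed in the generalized CR section.)

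The paper's argument bypasses all of this with a direct computation that uses only the involutivity of $L$ and $\bar L$. Since $V\otimes A^*\subset L+\bar L$, one writes $\xi_j=X_j+Y_j$ with $X_j\in\Gamma(L)$, $Y_j\in\Gamma(\bar L)$, so that $\mathbb{J}\xi_j\equiv iX_j-iY_j$ modulo $V\otimes\Ann(H)$. Expanding bilinearly and using $\lb L,L\rb\subset L$, $\lb\bar L,\bar L\rb\subset\bar L$ gives, modulo $V\otimes\Ann(H)$,
\[
\lb\mathbb{J}\xi_1,\mathbb{J}\xi_2\rb-\lb\xi_1,\xi_2\rb \;=\; \mathbb{J}\bigl(\lb\mathbb{J}\xi_1,\xi_2\rb-\lb\xi_1,\mathbb{J}\xi_2\rb\bigr).
\]
Substituting $e^{t\mathbb{J}}=\cos t+\sin t\,\mathbb{J}$ and using $\lb\xi_1,\xi_2\rb=0$ then yields $\lb e^{t\mathbb{J}}\xi_1,e^{t\mathbb{J}}\xi_2\rb\in e^{t\mathbb{J}}(V\otimes A^*)+V\otimes\Ann(H)=D_t$. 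This single quadratic identity, coming straight from the eigenbundle decomposition, is the missing ingredient in your outline.
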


\begin{proof}

Let $\xi_1,\xi_2\in\Gamma(V\otimes A^*)$, then since $V\otimes A^*\subset L\oplus \bar L$, we may choose $X_j\in \Gamma(L)$, and $Y_j\in \Gamma(\bar L)$, such that $\xi_j=X_j+Y_j$. It follows that $\mathbb{J}\xi_j=iX_j-iY_j+V\otimes\Ann(H)$. In fact, since $L\cap\bar L=V\otimes\Ann(H)$, by choosing $X_j$ and $Y_j$ appropriately, we may suppose that $iX_j-iY_j$ is any given representative of $\mathbb{J}\circ i^*(\xi_j)$ in $\pi^{-1}(H)$. Abusing notation will use the term $\mathbb{J}(\xi_j)$ and our particular choice of representative $iX_j-iY_j$ interchangeably. Then,
$$\begin{array}{l}
\lb \mathbb{J}\xi_1, \mathbb{J}\xi_2\rb-\lb\xi_1,\xi_2\rb \\
=\lb iX_1-iY_1,iX_2-iY_2\rb-\lb X_1+Y_1,X_2+Y_2\rb\\
=-2\lb X_1,X_2\rb-2\lb Y_1,Y_2\rb\\
\end{array}$$
and
$$\begin{array}{l}
\lb \mathbb{J}\xi_1, \xi_2\rb-\lb\xi_1,\mathbb{J}\xi_2\rb \\
=\lb iX_1-iY_1,X_2+Y_2\rb-\lb X_1+Y_1,iX_2-iY_2\rb\\
=2i\lb X_1,X_2\rb-2i\lb Y_1,Y_2\rb\\
\end{array}$$
Thus, since $L$ and hence $\bar L$ are involutive, we have $\lb \mathbb{J}\xi_1, \mathbb{J}\xi_2\rb-\lb\xi_1,\xi_2\rb +V\otimes\Ann(H)= \mathbb{J}(\lb \mathbb{J}\xi_1, \xi_2\rb-\lb\xi_1,\mathbb{J}\xi_2\rb)+V\otimes\Ann(H)$

We let $a=\cos(t)$ and $b=\sin(t)$, and we have,
$$\begin{array}{l}
\lb(a+b\mathbb{J})\xi_1,(a+b\mathbb{J})\xi_2\rb\\
=ab(\lb \xi_1,\mathbb{J}\xi_2\rb+\lb\mathbb{J}\xi_1,\xi_2\rb)+b^2\lb \mathbb{J}\xi_1, \mathbb{J}\xi_2\rb\\
=ab(\lb \xi_1,\mathbb{J}\xi_2\rb+\lb\mathbb{J}\xi_1,\xi_2\rb)+b^2(\lb \mathbb{J}\xi_1, \mathbb{J}\xi_2\rb-\lb\xi_1,\xi_2\rb)\\
\end{array}$$
 So modulo $V\otimes\Ann(H)$, we see that
$$\begin{array}{l}
\lb(a+b\mathbb{J})\xi_1,(a+b\mathbb{J})\xi_2\rb +V\otimes\Ann(H)\\
=b(a+b\mathbb{J})(\lb \xi_1,\mathbb{J}\xi_2\rb+\lb\mathbb{J}\xi_1,\xi_2\rb)+V\otimes\Ann(H)\\
\end{array}$$
Since $\lb \xi_1,\mathbb{J}\xi_2\rb+\lb\mathbb{J}\xi_1,\xi_2\rb\in V\otimes A^*$ it follows that $(\cos(t)+\sin(t)\mathbb{J})(V\otimes A^*)+ V\otimes\Ann(H)$ is involutive. 

\end{proof}

We next consider the map $P:V\otimes A^*\to H\xrightarrow{i} A$, which for $\xi,\eta\in V\otimes A^*$, is given by
$$\langle P(\xi),\eta\rangle=\langle\frac{\partial}{\partial t}\rvert_{t=0} e^{t\mathbb{J}}(\xi),\eta\rangle =\langle\mathbb{J}\xi,\eta\rangle \qquad(=\langle i\circ\pi\circ\mathbb{J}\circ \jpi\circ i^*(\xi),\eta\rangle ).$$
Clearly, since $\mathbb{J}$ is an orthogonal almost complex structure on $\mathbb{H}$, $P$ will be given by an element of $\Gamma(V^*\otimes\wedge^2 A)$, which we will also denote by $P$. Adapting a proposition given in \cite{G07}, we have:

\begin{proposition}(Gualtieri)\label{poisson}
The bivector field $P=i\circ\pi\circ\mathbb{J}\circ \jpi\circ i^*: V\otimes A^*\to A$ defines a Lie algebroid structure on $V\otimes A^*$: The bracket is given by 
$$[\xi,\eta]=\iota_{P(\cdot,\xi)}d\eta-\iota_{P(\cdot,\eta)}d\xi+d(P(\xi,\eta))),$$
where $\xi,\eta\in V\otimes A^*$, and the anchor map by $\xi\to a\circ P(\xi,\cdot)V\otimes A^*\to TM$, where $a:A\to TM$ is the anchor map of $A$. Furthermore, the map $\xi\to a\circ P(\xi,\cdot):V\otimes A^*\to A$ is a Lie-algebroid morphism.

\end{proposition}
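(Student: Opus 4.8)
The plan is to reduce the statement to the known generalized-complex case by the lifting machinery of Section~\ref{princconv}, and only then to unwind it back down to $M$. First I would note that all the structure involved is local on $M$, so by Corollary~\ref{localint} it suffices to work over a contractible chart, where $\AV$ is the quotient by a group $G$ of a $T\!P\:\mathbb{R}^k$-Courant algebroid on a principal $G$-bundle $P$; since $V$ has rank one, $\mathbb{R}^k=\mathbb{R}$ and this is an exact Courant algebroid on $P$ carrying the $G$-invariant Dirac structure $\tilde L=\nu^*L$, the $G$-invariant subbundle $\nu^*H\subset TP$, and the corresponding orthogonal complex structure $\nu^*\mathbb{J}$ on the pullback of $\mathbb{H}$. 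The point is that in this pulled-back picture $V\otimes A^*$ becomes $T^*P$ and the map $P=i\circ\pi\circ\mathbb{J}\circ\jpi\circ i^*$ becomes exactly the bivector associated to a (partially defined, supported on $\nu^*H$) generalized complex structure, so Gualtieri's proposition in \cite{G07} applies verbatim on $P$: the displayed bracket makes $T^*P$ into a Lie algebroid with anchor $\#\circ P$, and $\#\circ P:T^*P\to TP$ is a Lie algebroid morphism. (Strictly, one should check that Gualtieri's argument only uses isotropy of $\tilde L$, its involutivity, and the closedness of the relevant $3$-form $H_\phi$ — which is exactly the content of the previous proposition here that $D_t$ is integrable — rather than the full condition $\tilde L\cap\overline{\tilde L}=0$; this is the slight generalization already flagged before Proposition~\ref{poisson}.)

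Having the result on $P$, I would then descend it. Everything in sight — $\tilde L$, $\nu^*H$, $\nu^*\mathbb{J}$, the $3$-form, hence the bivector $\nu^*P$ and the bracket formula — is $G$-invariant by construction, so the Lie algebroid structure on $T^*P$ restricts to $G$-invariant sections and passes to the quotient, giving a Lie algebroid structure on $(T^*P)/G=V\otimes A^*$ whose bracket and anchor are computed by the same formulas (the operators $d$, $\iota$, and $P$ all intertwine with the quotient map, as recorded in the proof of Proposition~\ref{liftprop}). The morphism statement descends the same way: $\#\circ\nu^*P:T^*P\to TP$ is $G$-equivariant and a Lie algebroid morphism, so its quotient $a\circ P(\cdot,\cdot):V\otimes A^*\to A$ composed with $a:A\to TM$ is a Lie algebroid morphism onto $TM$, and in fact $\xi\mapsto a\circ P(\xi,\cdot)$ factors as the composite of the morphism $V\otimes A^*\to A$ with the anchor $a$.

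A cleaner, more intrinsic alternative — which I would mention but probably not carry out in full — is to avoid the principal bundle entirely and observe that $D_0=V\otimes A^*$ and $D_{\pi/2}=\mathbb{J}(V\otimes A^*)+V\otimes\Ann(H)$ are both $AV$-Dirac structures transverse in the appropriate sense, so the general fact that a complementary pair of Dirac-type structures induces a Lie bialgebroid, together with the explicit matching of brackets under $e^{t\mathbb{J}}$ already worked out in the previous proposition, forces the bracket on $V\otimes A^*\cong D_0$ to be the one dual (via $\mathbb{J}$) to the restriction of $\lb\cdot,\cdot\rb$ to $D_{\pi/2}$; expanding that dual bracket in terms of $\iota$, $d$, and $P$ recovers the stated formula, and the Jacobi identity plus the morphism property are then automatic from $\mathbb{J}$ being orthogonal with $\mathbb{J}^2=-1$ and $L$ involutive.

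The main obstacle I anticipate is purely bookkeeping rather than conceptual: verifying that Gualtieri's derivation in \cite{G07} genuinely goes through under the weaker hypothesis $L\cap\bar L=V\otimes\Ann(H)$ instead of $L\cap\bar L=0$, i.e.\ checking that every step is insensitive to the ambiguity "modulo $V\otimes\Ann(H)$" — which is precisely where the previous proposition's computation is used — and then tracking that ambiguity carefully through the Leibniz rule and the Jacobiator so that the final bracket on $V\otimes A^*$ is genuinely well defined and not just defined up to $\Ann(H)$-valued terms. Once that is pinned down, the descent from $P$ to $M$ is routine.
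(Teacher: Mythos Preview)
Your approach is genuinely different from the paper's, and it has one real gap.

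The paper does \emph{not} lift to a principal bundle. Instead it works intrinsically on $M$: it builds a graded ring $\mathcal{F}=(S(V)\otimes S(V^*))/I$ (locally $C^\infty(M)[t,t^{-1}]$) precisely in order to make sense of a Schouten--Nijenhuis bracket on $\mathcal{F}\otimes\wedge^*A$, so that $P\in\Gamma(V^*\otimes\wedge^2A)$ and the family $\beta_t$ live in a setting where $[\beta_t,\beta_t]$ is defined. It then derives the twisted-Poisson integrability condition $[\beta_t,\beta_t]=2\wedge^3\tilde\beta_t(H)$ inside this calculus, differentiates at $t=0$, and reads off $[P,P]=0$. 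Your idea --- trivialize $V$ by pulling back to $P$ so that the ordinary Schouten bracket applies and no $\mathcal{F}$ is needed --- is exactly the shortcut that the $\mathcal{F}$ machinery is designed to replace.

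The gap is that Corollary~\ref{localint}, and indeed all of Section~\ref{princconv}, carries the standing hypothesis that $A$ is \emph{transitive}. Proposition~\ref{poisson} does not assume this. If $A$ is not transitive (say a bundle of Lie algebras with zero anchor, or any $A$ whose anchor drops rank), there is no principal bundle $P$ with $A\cong TP/G$, even locally, and your first step never gets off the ground. So your argument proves the proposition only under an extra hypothesis; the paper's $\mathcal{F}$-calculus is there precisely to cover the general case.

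A secondary point: even after lifting, what you get on $P$ is still a generalized CR structure (with $\nu^*H\subsetneq TP$), not a generalized complex structure, so Gualtieri's result from \cite{G07} does not apply ``verbatim''. You correctly flag that the needed extension is handling the $\Ann(H)$ ambiguity --- but that is the actual analytic content of the proof, not mere bookkeeping, and once you have carried it out on $P$ with the ordinary Schouten bracket you have essentially reproduced the paper's computation in the special case $V$ trivial. What the lift buys you is avoiding $\mathcal{F}$; what it costs you is the transitivity hypothesis.
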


The proof is an adaptation of one found in \cite{G07}.

\begin{proof}
We choose a splitting of the $AV$-Courant algebroid, and use the isomorphism and notation described in Proposition~\ref{mainprop}. Then if we choose $t$ sufficiently small, the $AV$-Dirac structures $D_t$ can be described as the graphs of $\beta_t\in\Gamma(V^*\otimes\wedge^2 A)$.

In \cite{SW01}, it is shown that the integrability condition of a twisted Poisson structure $\beta$ over a 3-form background $\gamma$, is $[\beta,\beta]=\wedge^3\tilde\beta (\gamma)$, where $\tilde\beta:T^*M\to TM$ is given by $\tilde\beta(\xi)(\eta)=\beta(\xi,\eta)$. We would like to derive a similar equation for $\beta_t$, but we have not defined a bracket for sections of $V^*\otimes\wedge^2 A$. In order to define such a bracket, we first define a sheaf of rings over $M$:

We let $\mathcal{F}:=(S(V)\otimes S(V^*))/I$, where $S(V)$ denotes the symmetric algebra generated by $V$, and $I$ is the ideal generated by $u\otimes f - f(u)$ for $f\in \Gamma(V^*)$ and $u\in \Gamma(V)$. Since $V$ is one dimensional, if $t\in\Gamma(V)$ is a local basis then $\mathcal{F}$ is locally isomorphic to $C^\infty(M)[t,t^{-1}]$ as a ring. It is clear that it has a well defined $\mathbb{Z}$ grading, which for a homogeneous $v\in\mathcal{F}$, we denote by $\tilde v$

$\Gamma(S(V)\otimes S(V^*))$ is a $\Gamma(A)$ module, where sections of $\Gamma(A)$ act as derivations, and it is easy to check that $\Gamma(I)$ is a sub-module. Thus it is clear that $\Gamma(A)$ acts on $\Gamma(\mathcal{F})$ by derivations satisfying the Leibniz rule with respect to the ring structure on $\mathcal{F}$.

We define a bracket on $\mathcal{F}\otimes\wedge^* A$, as follows (for $v,w\in \Gamma(\mathcal{F})$ and $P,Q\in\Gamma(\wedge^*A)$):

\begin{itemize}
\item $[X,v]=Xv$ for any $X\in \Gamma(A)$, and $[v,w]=0$
\item $[P\wedge Q,v]=P\wedge[Q,v]+(-1)^{|Q|}[P,v]\wedge Q$
\item $[P,Q]$ is given by the Schouten-Nijenhuis bracket.
\item $[vP,wQ]=(v[P,w])Q-(-1)^{(|P|-1)(|Q|-1)}(w[Q,v])P+vw[P,Q]$.
\end{itemize}

If we write $\abs{vP}=i$ for $P\in\wedge^i A$, and $\deg(vP)=(\tilde v,\abs{vP})$, then it is clear that our bracket satisfies the following identities (for homogeneous $a,b,c\in \Gamma(\mathcal{F}\otimes\wedge^* A)$):

\begin{itemize}
\item $\deg(ab)=\deg(a)+\deg(b)$ and $\deg([a,b])=\deg(a)+\deg(b)-(0,1)$
\item $(ab)c=a(bc)$ and $ab=(-1)^{\abs{a}\abs{b}}ba$
\item $[a,bc]=[a,b]c+(-1)^{(\abs{a}-1)\abs{b}}b[a,c]$
\item $[a,b]=-(-1)^{(\abs{a}-1)(\abs{b}-1)}[b,a]$
\item $[a,[b,c]]=[[a,b],c]+(-1)^{(\abs{a}-1)(\abs{b}-1)}[b,[a,c]]$
\end{itemize}

We next extend $d$ to a map $d:\mathcal{F}\otimes\wedge^i A^*\to\mathcal{F}\otimes\wedge^{i+1}A^*$ in the obvious way. We also have a natural $\mathcal{F}$-bilinear pairing on $\Gamma(\mathcal{F}\otimes\wedge^* A^*)\times\Gamma(\mathcal{F}\otimes\wedge^* A)$, which for $v_i,w_j\in\mathcal{F}$, $\alpha_i\in\Gamma(A^*)$, and $X_j\in\Gamma(A)$, is given by
$$\langle(v_1\otimes\alpha_1)\cdots(v_p\otimes\alpha_p),(w_1\otimes X_1)\cdots(w_q\otimes X_q)\rangle ={\Bigg\{\begin{array}{ll}
0&\text{if }p\neq q\\
\det(v_iw_j\otimes\alpha_i(X_j))&\text{if }p=q\end{array}}.$$
We define a morphism $\iota:\mathcal{F}\otimes\wedge^* A\to\End(\mathcal{F}\otimes\wedge^* A^*)$ by $\langle\xi,PQ\rangle =\langle\iota_P\xi,Q\rangle $. For $P\in\mathcal{F}\otimes A$, $\iota_P$ is a derivation.

We also define a morphism $\breve{\iota}:\mathcal{F}\otimes\wedge^* A^*\to\End(\mathcal{F}\otimes\wedge^* A)$ by $\langle\xi\eta,P\rangle =\langle\xi,\breve{\iota}(\eta)P\rangle $. For $\alpha\in\mathcal{F}\otimes A^*$, $\breve{\iota}(\alpha)$ is a derivation on the right. Namely, $\breve{\iota}(\alpha)(PQ)=P \breve{\iota}(\alpha)Q+(-1)^{\abs{Q}}(\breve{\iota}(\alpha)P)Q$ (where $P,Q\in \mathcal{F}\otimes\wedge^* A$ are homogeneous).

Next, we notice that $\iota_{[P,Q]}=-[[\iota_Q,d],\iota_P]$. This is easy to check, following the argument given in \cite{M97}. Also following an argument in \cite{M97} one can verify that, for $\eta\in\Gamma(\mathcal{F}\otimes A^*)$,
$$
\breve{\iota}(\eta)[P,Q]-[P,\breve{\iota}(\eta)Q]-(-1)^{\abs{Q}-1}[\breve{\iota}(\eta)P,Q]
=(-1)^{\abs{Q}-2}(\breve{\iota}(d\eta)(PQ)-P \breve{\iota}(d\eta)Q-(\breve{\iota}(d\eta)P)Q).
$$
From this, we calculate, for any $\beta\in\Gamma(\mathcal{F}\otimes\wedge^2 A)$ and $\xi,\eta\in\Gamma(\mathcal{F}\otimes A^*)$,
$$
[\breve{\iota}(\xi)\beta,\breve{\iota}(\eta)\beta]=\frac{1}{2}\breve{\iota}(\xi\eta)[\beta,\beta]+[\beta,\langle\eta\xi,\beta\rangle ]
+\frac{1}{2}(\breve{\iota}(\eta d\xi)\beta^2-\breve{\iota}(\xi d\eta)\beta^2)-\langle d\xi,\beta\rangle \breve{\iota}(\eta)\beta+\langle d\eta,\beta\rangle \breve{\iota}(\xi)\beta
.$$
Furthermore, it is not difficult to verify that $[\beta,\langle\eta\xi,\beta\rangle ]=\breve{\iota}(d\beta(\eta,\xi))\beta$, while $$\frac{1}{2}(\breve{\iota}(\eta d\xi)\beta^2-\breve{\iota}(\xi d\eta)\beta^2)-\langle d\xi,\beta\rangle \breve{\iota}(\eta)\beta+\langle d\eta,\beta\rangle \breve{\iota}(\xi)\beta=\breve{\iota}(\iota_{\breve{\iota}(\xi)\beta}d\eta-\iota_{\breve{\iota}(\eta)\beta}d\xi)\beta.$$ 

Thus, we have, for $\beta\in \Gamma(V^*\otimes\wedge^2 A)$,
$$\begin{array}{rl}
&\lb -\breve{\iota}(\xi)\beta+\xi,-\breve{\iota}(\eta)\beta+\eta\rbp\\
=&[\breve{\iota}(\xi)\beta,\breve{\iota}(\eta)\beta]-\iota_{\breve{\iota}(\xi)\beta}d\eta+\iota_{\breve{\iota}(\eta)\beta}d\xi+d(\beta(\xi,\eta))+\iota_{\breve{\iota}(\xi)\beta}\iota_{\breve{\iota}(\eta)\beta}H\\
=&\breve{\iota}(\iota_{\breve{\iota}(\xi)\beta}d\eta-\iota_{\breve{\iota}(\eta)\beta}d\xi-d(\beta(\xi,\eta)))\beta-\iota_{\breve{\iota}(\xi)\beta}d\eta+\iota_{\breve{\iota}(\eta)\beta}d\xi+d(\beta(\xi,\eta))\\
+&\frac{1}{2}\breve{\iota}(\xi\eta)[\beta,\beta]+\iota_{\breve{\iota}(\xi)\beta}\iota_{\breve{\iota}(\eta)\beta}H.
\end{array}$$
It follows that $\beta_t$ defines an $AV$-Dirac structure under our chosen splitting if and only if $\frac{1}{2}\breve{\iota}(\eta\xi)[\beta_t,\beta_t]=\breve{\iota}(\iota_{\breve{\iota}(\xi)\beta_t}\iota_{\breve{\iota}(\eta)\beta_t}H)\beta_t$. To rewrite this, we let $\tilde\beta:\mathcal{F}\otimes A^*\to\mathcal{F}\otimes A$ be the map $\alpha\to -\breve{\iota}(\alpha)\beta$. The condition is then
$$[\beta_t,\beta_t]=2\wedge^3 \tilde\beta_t (H).$$
We differentiate both sides by $t$, and evaluate at 0. Since we have $P=\frac{\partial}{\partial t}\big\rvert_0 \beta_t$ and $\beta_0=0$, the cubic term vanishes, and we see that the condition is
$$[P,P]=0.$$
The result follows immediately from this.

\end{proof}

We also have a bracket $\{,\}$ on $\Gamma(V)$, which for $v,w\in\Gamma(V)$ is given by

$\{v,w\}=P(dv,dw).$

It satisfies the following properties (for $f\in C^\infty(M)$):

\begin{itemize}
\item $\{\cdot,\cdot\}$ is bilinear.
\item $\{v,w\}=-\{w,v\}$
\item $\{v,fw\}=f\{v,w\}+(a\circ P(dv)(f))w$
\item $\{u,\{v,w\}\}=\{\{u,v\},w\}+\{v,\{u,w\}\}$ (for any $u,v,w\in\Gamma(V)$)
\end{itemize}

Since $V$ is a line-bundle, this is quite similar to a Poisson structure. In particular, if $U\subset M$ is an open set on which $\sigma\in\Gamma(V\rvert_U)$ is a local basis such that $P(\sigma)=0$, then we have a morphism
$$\rho:C^\infty(U)\xrightarrow{f\to f\sigma} \Gamma(V\rvert_U),$$
which allows us to define a Poisson structure on $U$, by
$$\{f,g\}=\rho^{-1}\{\rho(f),\rho(g)\}.$$
In particular, if in some neighborhood $U\subset M$, $V$ admits a non-zero $A$-parallel section $\sigma\in\Gamma(V\rvert_U)$, then $P(\sigma)=0$, and thus $U$ is endowed with a Poisson structure. In fact, the Poisson structure associated to $U$ is this way is unique up to a constant multiple. Furthermore, if it exists at one point on a leaf of $A$, then it exists for any neighborhood of any point in that leaf:

\begin{remark}[Poisson Structure on a Leaf of $A$]
 Suppose that $F\subset M$ is a connected leaf of the foliation given by $A$, then $a:A\rvert_F\to TF$ is a Lie algebroid, and we have an exact sequence of Lie algebroids given by $0\to L=\ker(a)\to A\rvert_F\to TF\to 0$, where $L$ is actually a bundle of Lie algebras. The following are equivalent:

\begin{itemize}
\item $V$ admits an $A\rvert_F$-parallel section for any neighborhood $U\subset F$.
\item $L$ acts trivially on $V\rvert_F$.
\item $L_x$ acts trivially on $V_x$, for some point $x\in F$.\footnote{This follows from the fact that for any $x,y\in F$ there is a Lie algebroid morphism of $A$ covering a diffeomorphism of $M$ which takes $x$ to $y$. In addition these morphisms can be assumed to come from flowing along a section of $A$, and hence extend to $V$.}
\end{itemize}

Note that, up to a constant multiple, there is a unique $A$-parallel section of $V\rvert_F$. Thus, if $\sigma\in\Gamma(V\rvert_F)$ is a non-zero $A$-parallel section we can associate a Poisson structure to $F$, unique up to a constant multiple.
\end{remark}

\begin{remark}[Jacobi Structure]
If $V$ does not admit $A$-parallel sections, but for some $U\subset M$ there is a canonical choice of a local basis $\sigma\in\Gamma(V\rvert_U)$, we may still consider the isomorphism
$$\rho:C^\infty(U)\xrightarrow{f\to f\sigma} \Gamma(V\rvert_U),$$
which allows us to define a bracket on $C^\infty(U)$ by
$$[f,g]_\sigma=\rho^{-1}\{\rho(f),\rho(g)\}.$$
One notices that this bracket endows $C^\infty(U)$ with a Lie algebra structure which is local in the sense that the linear operator
$$D_f:C^\infty(U)\xrightarrow{g\to[f,g]_\sigma} C^\infty(U)$$
is local for all $f\in C^\infty(U)$. It is an important result (See \cite{S74},\cite{K76} or \cite{G84}) that for any local Lie algebra structure, there exists unique $\Lambda\in\Gamma(\wedge^2 TM)$, and $E\in\Gamma(TM)$ with $[\Lambda,\Lambda]=-2\Lambda\wedge E$ and $[\Lambda,E]=0$ such that
$$[f,g]_\sigma=\{f,g\}_\Lambda+f\Lied_X g-g\Lied_X f,$$
where $\{f,g\}_\Lambda=\breve{\iota}_{df}\breve{\iota}_{dg}\Lambda$.

The triple $(U,\Lambda,E)$ is then called a Jacobi manifold. Note however the dependence of $\Lambda$ and $E$ on $\sigma$; this is unlike the local Poisson structure which (if it exists) is unique up to a constant multiple.
\end{remark}

\begin{example}[CR Structures]
As described in Section~\ref{CRstructures}, a CR-structure on a manifold $M$ can be described by a generalized CR structure. In this case, $V$ can be taken to be the trivial bundle, and $A$ can be taken to be $TM$. It follows from the above discussion that there is a Poisson structure $P\in\Gamma(\wedge^2 TM)$ associated to the CR structure.

If $L\subset \mathbb{C}\otimes TM$ is the CR-structure, and $H=\mathbf{Re}(L\oplus\bar L)\subset TM$, then $P(T^*M)\subset H$. So the symplectic foliation associated to $P$ is everywhere tangent to $H$.
\end{example}

\begin{example}[Quotients of Generalized Complex Structures]
If the procedures described in Example~\ref{principalDirac} and Example~\ref{principal} are applied to a generalized complex structure, then one obtains a generalized CR structure.
\end{example}

\begin{example}[Contact Structures and Generalized Contact Structures]
Suppose that $M$ is a contact manifold, then there is a canonical way to associate a generalized CR structure to $M$. In particular, if $N=M\times \mathbb{R}$ is its symplectization, then $N$ admits a generalized complex structure corresponding to its symplectic structure. $\mathbb{R}$ acts on $N$, and the quotient is a generalized CR structure on $M$ (In the sense of Example~\ref{e1mprincipal} and Example~\ref{e1mdirac})

This procedure is also described in \cite{IW05} and \cite{IW06}, where they describe it as a generalized contact structure. In fact any generalized contact structure results from the quotient of generalized complex structure, and as such can also be described as a generalized CR structure.

Since the Lie algebroid $A$ and the vector bundle $V$ describe an $\mathcal{E}^1(M)$ structure, as given in Example~\ref{e1mcourant}, it can be checked that $V$ does not admit parallel sections, and thus, in general, $P\in\Gamma(V^*\otimes\wedge^2 A)$ does not describe a Poisson structure, but rather a Jacobi structure. When the generalized contact structure is simply a contact structure, then $P$ corresponds to a Jacobi structure describing the contact structure.

To be more explicit, we let $M$ be a contact manifold with contact distribution $\xi\subset TM$, and $N=M\times\mathbb{R}$ its symplectization, where we let $t:M\times\mathbb{R}\to\mathbb{R}$ be the projection to the second factor, and $\omega\in\Omega^2(N)$ denote the corresponding symplectic form. (That is, $\omega = e^t(d\eta+dt\wedge \eta)$, where $\eta\in\Ann(\xi)$ is nowhere vanishing.) We note that $\Lied_{\frac{\partial}{\partial t}}\omega=\omega$.

Since $N$ is a symplectic manifold, we can associate to it a canonical generalized complex structure $\mathcal{J}:TN\oplus T^*N\to TN\oplus T^*N$ on the standard Courant algebroid 
$$0\to T^*N\to TN\oplus T^*N\to TN\to 0$$
(see \cite{G07} for details). 

The Poisson bivector $\pi\in\Gamma(\wedge^2 TN)$ associated to this generalized complex structure has the property that $\Lied_{\frac{\partial}{\partial t}}\pi=-\pi$ (since it is the Poisson bivector corresponding to $\omega$). It follows that we can write $\pi=e^{-t}(\Lambda+\frac{\partial}{\partial t}\wedge E)$ for $E\in\Gamma(M)$, and $\Lambda\in\Gamma(\wedge^2 M)$. Then $[\pi,\pi]=0$ implies that
$$
0=[\pi,\pi]=[e^{-t}(\Lambda+\frac{\partial}{\partial t}\wedge E),e^{-t}(\Lambda+\frac{\partial}{\partial t}\wedge E)]
=e^{-2t}[\Lambda,\Lambda]-2e^{-2t}\Lambda\wedge E+2e^{-2t}\frac{\partial}{\partial t}\wedge[\Lambda,E].
$$
From this it follows that $[\Lambda,\Lambda]=-2\Lambda\wedge E$ and $[\Lambda,E]=0$, which are the defining conditions for a Jacobi structure $(\Lambda,E)$ on $M$.

Now, we consider the $T\!M\oplus\mathbb{R}-\mathbb{R}$ Courant algebroid structure on $M$, given by taking the quotient by the $G=\mathbb{R}$ action on $N=M\times \mathbb{R}$,
$$0\to T^*N/G\to (TN\oplus T^*N)/G\to TN/G\to 0,$$
and the generalized CR structure on $M$ given by quotient homomorphism 
$$\mathbb{J}:=\mathcal{J}/G:(TN\oplus T^*N)/G\to (TN\oplus T^*N)/G.$$
 They define an $AV$-Courant algebroid, where $A=TN/G$, and the bundle $V\to M$ is trivial, with $\Gamma(V)\simeq C^\infty(N)^G$ (this is in fact an $\mathcal{E}^1(M)$ structure, see \cite{IW05}). Abusing notation, we denote by $e^t\in \Gamma(V)$ the section associated to the $G$-invariant function $e^t\in C^\infty(N)$.

Then the bivector $P\in\Gamma(V^*\otimes\wedge^2 A)$ associated to the generalized CR structure on $M$ is simply $e^{-t}(\Lambda+\frac{\partial}{\partial t}\wedge E)$, and it defines a Jacobi structure on $M$, with bivector field $\Lambda$ and vector field $E$. Since $\Lambda^n\wedge E\neq 0$ (where $\dim(M)=2n+1$) this Jacobi structure corresponds to a contact structure. In fact, the contact distribution is given by $\operatorname{span}\{\breve{\iota}_\alpha\Lambda\mid\alpha\in T*M\}$; and if $\theta\in\Omega^1(M)$ satisfies $\breve{\iota}_\theta\Lambda=0$ and $\breve{\iota}_\theta E=1$, then $\theta$ is a contact form. It is not difficult to see that this is the original contact structure, $\xi$, defined on $M$. (In fact, if $\omega=e^t(d\eta+dt\wedge\eta)$ is the symplectic form on $N$ (where $\eta\in\Ann(\xi)$ is nowhere vanishing), then $E$ is a reeb vector field for $\eta$ and $\theta=\eta$.)

We must note that, if instead of trivializing $V$ by the section $e^t\in\Gamma(V)$, we made the transformation $e^t\to fe^t$, for some nowhere vanishing $f\in C^\infty(M)$, then the appropriate changes to the Jacobi structure would be $\Lambda\to f\Lambda$, $E\to fE-\breve{\iota}_{df}\Lambda$, and the transformation for the contact form would be $\theta\to \frac{1}{f}\theta$. Thus it is clear that the freedom to modify the trivializing section of $V$ by a scalar multiple does not change the contact distribution, and fully accounts for the freedom to change the contact form by a scalar multiple. Indeed the generalized CR structure is defined intrinsically.
\end{example}

\section{Appendix: Proof of Proposition~\ref{mainprop}}
Suppose that $M$ is a manifold, $A$ is a Lie algebroid over $M$, $V$ is an $A$-module over $M$, and $\AV$ is an $AV$-Courant algebroid over $M$.

For $X,Y\in \Gamma(A)$, we have the following identities:

\begin{itemize}
\item $[\iota_X,\iota_Y]=0$
\item $[d,\iota_X]=\Lied_X$
\item $[\Lied_X,\iota_Y]=\iota_{[X,Y]}$
\item $[d,d]=0$
\item $[\Lied_X,d]=0$
\item $[\Lied_X,\Lied_Y]=\Lied_{[X,Y]}$
\end{itemize}

We will provide the proof we promised for Proposition~\ref{mainprop}, which we restate here:

\begin{proposition}
Let $\phi:A\to\AV$ be an isotropic splitting. Then under the isomorphism $\phi\oplus\jpi:A\oplus(V\otimes A^*)\to\AV$, the bracket is given by
$$\lb X+\xi,Y+\eta\rbp=[X,Y]+\Lied_X\eta - \iota_Y d\xi + \iota_X\iota_Y H_\phi,$$
where $X,Y\in \Gamma(A)$, $\xi,\eta\in\Gamma(V\otimes A^*)$, and $H_\phi\in\Gamma(V\otimes\wedge^3 A^*)$, with $ d H_\phi = 0$.

Furthermore, if $\psi:A\to\AV$ is a different choice of isotropic splitting, then $\psi(X)=\phi(X)+\jpi(\iota_X\beta)$, and $H_\psi=H_\phi- d\beta$, where $\beta\in\Gamma(V\otimes\wedge^2 A^*)$.
\end{proposition}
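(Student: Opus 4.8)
The plan is to mirror the classical computation for exact Courant algebroids (as in \cite{SW01} or \cite{BC05}), using the four axioms (AV-\ref{AX4})--(AV-\ref{AX7}) together with the compatibility condition (\ref{bracketcond}) and the $C^\infty(M)$-linearity established in Remark~\ref{brktDer}. First I would fix the isotropic splitting $\phi$ and use the isomorphism $\phi\oplus\jpi:A\oplus(V\otimes A^*)\to\AV$ to transport all structures to $A\oplus(V\otimes A^*)$; under this identification the pairing is forced to be (\ref{pairing}) by isotropy of $\phi(A)$ and by (\ref{bracketcond}). The first step is to compute $\lb\phi(X),\phi(Y)\rb$. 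By (AV-\ref{AX5}) its $A$-component is $[X,Y]$, so $\lb\phi(X),\phi(Y)\rb=\phi([X,Y])+\jpi(\Theta(X,Y))$ for some $\Theta\in\Gamma(V\otimes\wedge^2A^*)$ — skew-symmetry of $\Theta$ comes from (AV-\ref{AX6}), which forces $\lb e,e\rb=\tfrac12 D\langle e,e\rangle=0$ on isotropic sections and hence $\lb\phi(X),\phi(X)\rb\in\jpi(\operatorname{Im}d)$; a short check using $D=\jpi\circ d$ shows the symmetric part is exact and the ``$H$-part'' can be taken genuinely alternating. I will then \emph{define} $H_\phi$ by $\iota_X\iota_Y H_\phi:=\langle\lb\phi(X),\phi(Y)\rb,\phi(Z)\rangle$ (contracted against $Z$), i.e. $H_\phi(X,Y,Z)=\langle\lb\phi(X),\phi(Y)\rb,\phi(Z)\rangle$, and check this is totally skew in $X,Y,Z$: skewness in $X,Y$ is the previous point, and skewness in $Y,Z$ follows from (AV-\ref{AX7}) applied with $e_2=e_3=\phi(Y)+\phi(Z)$ together with isotropy.

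The second step is the mixed and pure $V\otimes A^*$ terms. Using (\ref{bracketcond}) and (AV-\ref{AX7}) with one argument of the form $\jpi(\eta)$: for $e_1=\phi(X)$, $\langle\lb\phi(X),\jpi(\eta)\rb,e\rangle=\Lied_X\langle\jpi(\eta),e\rangle-\langle\jpi(\eta),\lb\phi(X),e\rb\rangle=\Lied_X(\iota_{\pi(e)}\eta)-\iota_{\pi(\lb\phi(X),e\rb)}\eta$, and since $\pi\lb\phi(X),e\rb=[X,\pi(e)]$ by (AV-\ref{AX5}), the right-hand side equals $\iota_{\pi(e)}(\Lied_X\eta)$ by the identity $[\Lied_X,\iota_Y]=\iota_{[X,Y]}$; nondegeneracy of the pairing then gives $\lb\phi(X),\jpi(\eta)\rb=\jpi(\Lied_X\eta)$. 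For the reversed order, (AV-\ref{AX6}) applied to $\phi(X)+\jpi(\eta)$, expanded bilinearly, yields $\lb\jpi(\eta),\phi(X)\rb+\lb\phi(X),\jpi(\eta)\rb=D\langle\phi(X),\jpi(\eta)\rangle=\jpi(d\,\iota_X\eta)$, hence $\lb\jpi(\eta),\phi(X)\rb=\jpi(d\iota_X\eta-\Lied_X\eta)=-\jpi(\iota_X d\eta)$ by $\Lied_X=[d,\iota_X]$. Finally $\lb\jpi(\xi),\jpi(\eta)\rb$: its image under $\pi$ vanishes by (AV-\ref{AX5}), so it lands in $\jpi(V\otimes A^*)$, and pairing with $\phi(Z)$ and using (\ref{bracketcond}) forces it to be zero. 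Assembling the four pieces by $C^\infty(M)$-bilinearity (Remark~\ref{brktDer}) gives exactly (\ref{bracket}).

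The third step is $dH_\phi=0$. Here I would feed the Jacobi identity (AV-\ref{AX4}) applied to the triple $\phi(X),\phi(Y),\phi(Z)$, pair the resulting identity against $\phi(W)$, and use (AV-\ref{AX7}) to convert the brackets into $\Lied$-terms; the coefficient of the ``top'' term is precisely the cyclic/alternating sum defining $dH_\phi$ evaluated on $(X,Y,Z,W)$, which must therefore vanish. Concretely one gets $dH_\phi(X,Y,Z,W)=\langle\lb\phi(X),\lb\phi(Y),\phi(Z)\rb\rb-\lb\lb\phi(X),\phi(Y)\rb,\phi(Z)\rb-\lb\phi(Y),\lb\phi(X),\phi(Z)\rb\rb,\phi(W)\rangle$ up to sign and reorganization, and (AV-\ref{AX4}) kills the right-hand side.

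For the last paragraph of the statement (change of splitting): given another isotropic splitting $\psi$, the difference $\psi(X)-\phi(X)$ lies in $\ker\pi=\jpi(V\otimes A^*)$, so $\psi(X)=\phi(X)+\jpi(\mu(X))$ for a bundle map $\mu:A\to V\otimes A^*$, i.e. $\mu\in\Gamma(V\otimes A^*\otimes A^*)$; isotropy of $\psi(A)$ forces $\langle\psi(X),\psi(Y)\rangle=0$, and expanding with (\ref{bracketcond}) gives $\iota_X\mu(Y)+\iota_Y\mu(X)=0$, so $\mu(X)=\iota_X\beta$ for a unique $\beta\in\Gamma(V\otimes\wedge^2A^*)$. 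Then $H_\psi(X,Y,Z)=\langle\lb\psi(X),\psi(Y)\rb,\psi(Z)\rangle$; substituting $\psi=\phi+\jpi\circ\iota_\bullet\beta$, expanding via the already-computed bracket formula (\ref{bracket}) and (\ref{pairing}), and simplifying with $\Lied_X=[d,\iota_X]$ and $[\Lied_X,\iota_Y]=\iota_{[X,Y]}$, the correction terms collect into $-\iota_X\iota_Y\iota_Z d\beta$ (the same cancellation as in the ordinary Courant case), giving $H_\psi=H_\phi-d\beta$.

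The main obstacle I anticipate is bookkeeping rather than conceptual: verifying the total skew-symmetry of $H_\phi$ and, especially, running the $dH_\phi=0$ computation cleanly, since it requires carefully expanding (AV-\ref{AX4}) with the mixed bracket formulas derived in step two and tracking signs through the identities $[\Lied_X,\iota_Y]=\iota_{[X,Y]}$ and $\Lied_X=[d,\iota_X]$; this is exactly the point where the parallel with the classical Courant-algebroid argument (\cite{BC05},\cite{SW01}) is invoked, so I would present it in that style and reference those sources for the routine parts.
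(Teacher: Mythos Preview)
Your proposal is correct and follows essentially the same route as the paper's proof. The paper organizes the argument into five lemmas in the appendix: first $\lb e,\jpi(\xi)\rb=\jpi(\Lied_{\pi(e)}\xi)$ and $\lb\jpi(\xi),e\rb=-\jpi(\iota_{\pi(e)}d\xi)$ (your step two), then the definition and total skew-symmetry of $H_\phi$ via $H_\phi(X,Y,Z)=\langle\phi(Z),\lb\phi(X),\phi(Y)\rb\rangle$ (your step one), then $dH_\phi=0$ from (AV-\ref{AX4}), and finally the change-of-splitting computation; the only cosmetic difference is that the paper extracts $\iota_Z\iota_Y\iota_X\,dH_\phi=0$ directly from the Leibniz identity on $\phi(X),\phi(Y),\phi(Z)$ rather than pairing against a fourth element $\phi(W)$, but this is the same computation.
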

\begin{proof}
The proof will follow immediately from the following lemmas:
\end{proof}

\begin{lemma}
If $\xi\in \Gamma(V\otimes A^*)$, and $e\in \Gamma(\AV)$, then $\lb e,\jpi(\xi)\rb=\jpi(\Lied_{\pi(e)}\xi)$
\end{lemma}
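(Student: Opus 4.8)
The plan is to exploit the defining properties of $AV$-Courant algebroids---in particular the compatibility condition (\ref{bracketcond}) and the fact (Remark~\ref{brktDer}) that $\lb e,\cdot\rb$ is a derivation of the pairing with values governed by the Lie derivative on $V$---together with the non-degeneracy of $\langle\cdot,\cdot\rangle$, which was noted to follow from (\ref{altcond}) being injective. The strategy is: rather than computing $\lb e,\jpi(\xi)\rb$ directly, I would test it against an arbitrary section $e'\in\Gamma(\AV)$ under the pairing, show that $\langle\lb e,\jpi(\xi)\rb,e'\rangle=\langle\jpi(\Lied_{\pi(e)}\xi),e'\rangle$ for all $e'$, and conclude equality by non-degeneracy.

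First I would apply Axiom (AV-\ref{AX7}) with $e_1=e$, $e_2=\jpi(\xi)$, $e_3=e'$:
$$\langle\lb e,\jpi(\xi)\rb,e'\rangle=\Lied_{\pi(e)}\langle\jpi(\xi),e'\rangle-\langle\jpi(\xi),\lb e,e'\rb\rangle.$$
Now I would rewrite both terms on the right using (\ref{bracketcond}), which says $\langle e'',\jpi(\zeta)\rangle=\iota_{\pi(e'')}\zeta$ for any $e''$ and $\zeta\in\Gamma(V\otimes A^*)$; by symmetry of the form this gives $\langle\jpi(\xi),e'\rangle=\iota_{\pi(e')}\xi$ and $\langle\jpi(\xi),\lb e,e'\rb\rangle=\iota_{\pi(\lb e,e'\rb)}\xi$. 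Using Axiom (AV-\ref{AX5}) the latter interior product is $\iota_{[\pi(e),\pi(e')]}\xi$. So the right-hand side becomes $\Lied_{\pi(e)}(\iota_{\pi(e')}\xi)-\iota_{[\pi(e),\pi(e')]}\xi$, which by the commutation identity $[\Lied_X,\iota_Y]=\iota_{[X,Y]}$ (listed in the appendix's identities, valid on $\Gamma(V\otimes\wedge^* A^*)$) equals $\iota_{\pi(e')}\Lied_{\pi(e)}\xi$. Finally, applying (\ref{bracketcond}) once more in reverse, $\iota_{\pi(e')}(\Lied_{\pi(e)}\xi)=\langle\jpi(\Lied_{\pi(e)}\xi),e'\rangle$. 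Since this holds for every $e'\in\Gamma(\AV)$ and the pairing is non-degenerate, $\lb e,\jpi(\xi)\rb=\jpi(\Lied_{\pi(e)}\xi)$.

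I do not anticipate a serious obstacle here; the main point requiring a little care is bookkeeping the symmetry of $\langle\cdot,\cdot\rangle$ so that (\ref{bracketcond}) can be applied with $\jpi(\xi)$ in either slot, and confirming that the commutator identity $[\Lied_X,\iota_Y]=\iota_{[X,Y]}$ is being used on $\xi\in\Gamma(V\otimes A^*)$ rather than on sections of $\AV$. One should also note that the Lie derivative appearing in (AV-\ref{AX7}), $\Lied_{\pi(e)}$ acting on the $V$-valued function $\langle\jpi(\xi),e'\rangle=\iota_{\pi(e')}\xi$, is exactly the $V$-module action extended to $V\otimes\wedge^* A^*$, so the two occurrences of $\Lied$ are consistent. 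This lemma is the first in the chain of lemmas reducing Proposition~\ref{mainprop} to the standard Courant-algebroid computation: it pins down the bracket on the subbundle $\jpi(V\otimes A^*)$, and subsequent lemmas will handle $\lb\phi(X),\jpi(\xi)\rb$, $\lb\jpi(\xi),\phi(X)\rb$ via (AV-\ref{AX6}), and finally $\lb\phi(X),\phi(Y)\rb$, from which formula (\ref{bracket}) and the closedness $dH_\phi=0$ follow by invoking (AV-\ref{AX4}).
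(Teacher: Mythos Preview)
Your proof is correct and follows essentially the same argument as the paper: both pair $\lb e,\jpi(\xi)\rb$ against an arbitrary test section, apply Axiom~(AV-\ref{AX7}), use (\ref{bracketcond}) and (AV-\ref{AX5}) to convert to interior products, and then invoke $[\Lied_X,\iota_Y]=\iota_{[X,Y]}$ before concluding by non-degeneracy.
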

\begin{proof}
Let $e_1,e_2\in\Gamma(\AV)$, $\xi\in \Gamma(V\otimes A^*)$.
$$\begin{array}{rcl}
\langle\lb e_1,\jpi(\xi)\rb,e_2\rangle &=&\Lied_{\pi(e_1)}\langle\jpi(\xi),e_2\rangle -\langle\jpi(\xi),\lb e_1,e_2\rb\rangle \\
&=& \Lied_{\pi(e_1)}\iota_{\pi(e_2)}\xi-\iota_{\pi([ e_1,e_2])}\xi\\
&=& \Lied_{\pi(e_1)}\iota_{\pi(e_2)}\xi-\iota_{[\pi( e_1),\pi(e_2)]}\xi\\
&=& \Lied_{\pi(e_1)}\iota_{\pi(e_2)}\xi-[\Lied_{\pi( e_1)},\iota_{\pi(e_2)}]\xi\\
&=& \iota_{\pi(e_2)}\Lied_{\pi(e_1)}\xi\\
&=& \langle \jpi(\Lied_{\pi(e_1)}\xi),e_2\rangle\\
\end{array}$$
\end{proof}

\begin{lemma}
If $\xi\in \Gamma(V\otimes A^*)$, and $e\in \Gamma(\AV)$, then $\lb \jpi(\xi),e\rb=-\jpi(\iota_{\pi(e)} d\xi)$.
\end{lemma}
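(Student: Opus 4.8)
The plan is to use the axioms of an $AV$-Courant algebroid together with the previous lemma. The key observation is Axiom~(AV-\ref{AX6}), which controls the failure of the bracket to be skew-symmetric: for any $e_1, e_2 \in \Gamma(\AV)$ we have
$$\lb e_1, e_2\rb + \lb e_2, e_1\rb = D\langle e_1, e_2\rangle = \jpi\circ d\,\langle e_1, e_2\rangle,$$
which follows by polarizing $\lb e,e\rb = \tfrac{1}{2}D\langle e,e\rangle$.

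First I would apply this identity with $e_1 = \jpi(\xi)$ and $e_2 = e$, giving
$$\lb \jpi(\xi), e\rb = -\lb e, \jpi(\xi)\rb + \jpi\bigl(d\,\langle \jpi(\xi), e\rangle\bigr).$$
Next I would invoke the previous lemma to rewrite the first term as $-\jpi(\Lied_{\pi(e)}\xi)$, and use the compatibility condition~(\ref{bracketcond}) to identify $\langle \jpi(\xi), e\rangle = \iota_{\pi(e)}\xi$ (using symmetry of the pairing). This yields
$$\lb \jpi(\xi), e\rb = -\jpi(\Lied_{\pi(e)}\xi) + \jpi\bigl(d\,\iota_{\pi(e)}\xi\bigr) = \jpi\bigl(d\,\iota_{\pi(e)}\xi - \Lied_{\pi(e)}\xi\bigr).$$

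Finally I would apply the Cartan-type identity $\Lied_X = [d, \iota_X] = d\,\iota_X + \iota_X d$ (valid for $X = \pi(e) \in \Gamma(A)$, as recorded in the appendix's list of identities), so that $d\,\iota_{\pi(e)}\xi - \Lied_{\pi(e)}\xi = -\iota_{\pi(e)}d\xi$, which gives exactly $\lb \jpi(\xi), e\rb = -\jpi(\iota_{\pi(e)}d\xi)$. I do not anticipate a serious obstacle here: the proof is a short computation once one correctly unwinds the definition $D = \jpi\circ d$ and remembers that the non-skew-symmetric correction in~(AV-\ref{AX6}) is itself built from $\jpi$, so the whole identity lives in the image of $\jpi$ and can be checked there. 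The only point requiring a little care is the polarization of~(AV-\ref{AX6}) to obtain the bilinear form of the symmetrization identity, but this is entirely routine.
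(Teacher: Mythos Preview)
Your proof is correct and is essentially identical to the paper's own argument: both rewrite $\lb \jpi(\xi),e\rb$ as $D\langle \jpi(\xi),e\rangle - \lb e,\jpi(\xi)\rb$, invoke the previous lemma and condition~(\ref{bracketcond}), and finish with the Cartan identity $\Lied_X = d\iota_X + \iota_X d$. The only cosmetic difference is that you explicitly mention polarizing (AV-\ref{AX6}), whereas the paper uses the polarized form directly.
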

\begin{proof}
$$\begin{array}{rcl}
\lb \jpi(\xi),e\rb&=&D\langle\jpi(\xi),e\rangle-\lb e,\jpi(\xi)\rb \\
&=&\jpi(d\iota_{\pi(e)}\xi)-\jpi(\Lied_{\pi(e)}\xi)\\
&=&\jpi( d\iota_{\pi(e)}\xi-(\iota_{\pi(e)} d\xi+ d\iota_{\pi(e)}\xi))\\
&=&-\jpi(\iota_{\pi(e)} d\xi)
\end{array}$$
\end{proof}

\begin{lemma}
If $\phi:A\to\AV$ is an isotropic splitting, and if $X,Y\in \Gamma(A)$ then 
$$\lb \phi(X),\phi(Y)\rb-\phi([X,Y])=\jpi(\iota_X\iota_YH),$$
where $H\in\Gamma(V\otimes\wedge^3 A^*)$.
\end{lemma}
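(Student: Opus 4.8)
The plan is to show that the section $e_{X,Y} := \lb \phi(X),\phi(Y)\rb-\phi([X,Y])$ actually lies in the image of $\jpi$, i.e. projects to zero under $\pi$, and then to check that the resulting map $(X,Y)\mapsto \iota_X\iota_Y H$ is $C^\infty(M)$-bilinear, skew-symmetric and alternating, so that it comes from a genuine bundle map $H\in\Gamma(V\otimes\wedge^3 A^*)$. First I would apply Axiom (AV-\ref{AX5}): $\pi(\lb\phi(X),\phi(Y)\rb)=[\pi\phi(X),\pi\phi(Y)]=[X,Y]$ since $\phi$ splits (\ref{exactsequence}), and $\pi(\phi([X,Y]))=[X,Y]$ for the same reason; hence $\pi(e_{X,Y})=0$, so by exactness there is a unique $\Theta(X,Y)\in\Gamma(V\otimes A^*)$ with $e_{X,Y}=\jpi(\Theta(X,Y))$.

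Next I would pin down the tensoriality and symmetry properties of $\Theta$. Skew-symmetry in $X,Y$ is the statement that $\Theta(X,Y)+\Theta(Y,X)=0$; this follows from Axiom (AV-\ref{AX6}) applied to $\phi(X)+\phi(Y)=\phi(X+Y)$ together with the fact that $\phi(A)$ is isotropic, so $\langle\phi(X+Y),\phi(X+Y)\rangle=0$ and therefore $\lb\phi(X),\phi(Y)\rb+\lb\phi(Y),\phi(X)\rb=D\langle\phi(X),\phi(Y)\rangle=0$; combined with the Leibniz/Jacobi identity for $[\cdot,\cdot]_A$ this gives the claim. For $C^\infty(M)$-linearity I would use the derivation property of the Courant bracket recorded in Remark~\ref{brktDer}, namely $\lb e_1,fe_2\rb=f\lb e_1,e_2\rb+a\circ\pi(e_1)(f)\,e_2$, together with the standard fact that $\phi(fX)=f\phi(X)$ and the Leibniz rule for $[\cdot,\cdot]_A$; the anchor terms coming from $\lb\phi(X),\phi(fY)\rb$ and from $\phi([X,fY]_A)=\phi(f[X,Y]_A)+\phi((aX)(f)Y)$ cancel, leaving $\Theta(X,fY)=f\Theta(X,Y)$, and $C^\infty(M)$-linearity in the first slot follows from skew-symmetry. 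Hence $\Theta$ is a section of $V\otimes A^*\otimes\wedge^2 A^*$; restricting to the alternating part (which is all of it, by skew-symmetry and the evident vanishing $\Theta(X,X)=0$) we obtain $H\in\Gamma(V\otimes\wedge^3 A^*)$ with $\Theta(X,Y)=\iota_X\iota_Y H$.

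Finally $dH=0$ should drop out of Axiom (AV-\ref{AX4}), the Jacobi identity for the Courant bracket, applied to the triple $\phi(X),\phi(Y),\phi(Z)$. Expanding each bracket using the formula just derived, together with Lemmas~1 and~2 (which tell us how $\lb\cdot,\cdot\rb$ interacts with $\jpi$ of a form via $\Lied$ and $\iota d$), and collecting the $\jpi(V\otimes\wedge^* A^*)$-components, the Jacobiator reduces to $\jpi$ applied to the expression $\iota_X\iota_Y\iota_Z\, dH$ (after using the identities $[\Lied,\iota]=\iota_{[\cdot,\cdot]}$, $[d,\iota]=\Lied$, $d^2=0$ listed in the appendix); since the left side is zero, non-degeneracy of $\jpi$ forces $\iota_X\iota_Y\iota_Z\,dH=0$ for all $X,Y,Z$, hence $dH=0$. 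The last assertion of the proposition, on how $H$ changes under a different isotropic splitting $\psi$, follows by writing $\psi(X)-\phi(X)\in\ker\pi$ so $\psi(X)=\phi(X)+\jpi(\gamma(X))$ for some $\gamma\in\Gamma(V\otimes A^*\otimes A^*)$; isotropy of $\psi(A)$ forces $\gamma(X)(Y)$ to be skew in $X,Y$, i.e. $\gamma(X)=\iota_X\beta$ for some $\beta\in\Gamma(V\otimes\wedge^2 A^*)$, and re-running the computation of the Courant bracket of $\psi(X),\psi(Y)$ using Lemmas~1 and~2 yields $H_\psi=H_\phi-d\beta$.

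The main obstacle I anticipate is the $dH=0$ computation: it requires carefully expanding a triple Courant bracket, keeping track of every $\Lied$, $\iota d$ and $H$-term produced by Lemmas~1--2 and the bracket formula, and recognizing the surviving combination as $\iota\iota\iota\,dH$; the bookkeeping is delicate but each step is forced by the graded-commutator identities collected at the start of the appendix. The tensoriality argument is routine once one is careful about the anchor correction terms, and the change-of-splitting formula is a shorter variant of the same manipulations.
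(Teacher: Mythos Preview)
Your argument has a genuine gap in the step where you pass from ``$\Theta$ is a section of $V\otimes A^*\otimes\wedge^2 A^*$'' to ``$H\in\Gamma(V\otimes\wedge^3 A^*)$''. You have established that $\Theta(X,Y)$ is $C^\infty(M)$-bilinear and skew in $X,Y$; this places $\Theta$ in $V\otimes A^*\otimes\wedge^2 A^*$, but it does \emph{not} force the associated trilinear form $H(X,Y,Z):=\iota_Z\Theta(X,Y)$ to be totally antisymmetric. Your justification---``by skew-symmetry and the evident vanishing $\Theta(X,X)=0$''---only re-asserts antisymmetry in the first two slots. What is missing is antisymmetry between the ``output'' slot $Z$ and one of $X,Y$, e.g.\ $H(X,Y,Z)=-H(X,Z,Y)$. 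Nothing you have used (AV-\ref{AX5}, AV-\ref{AX6}, the derivation property) gives this.

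The paper supplies exactly this missing step using Axiom~(AV-\ref{AX7}) together with isotropy of $\phi$: from $0=\Lied_X\langle\phi(Z),\phi(Y)\rangle=\langle\lb\phi(X),\phi(Z)\rb,\phi(Y)\rangle+\langle\phi(Z),\lb\phi(X),\phi(Y)\rb\rangle$ one reads off $H(X,Z,Y)+H(X,Y,Z)=0$. Once you add this, your proof is fine and in fact follows the paper's line closely (the paper gets tensoriality by pairing with $\phi(Z)$ and invoking total antisymmetry, whereas you compute it directly from the Leibniz rule---either works). Note also that $dH=0$ and the change-of-splitting formula are stated and proved as separate lemmas in the paper; the present lemma only asserts $H\in\Gamma(V\otimes\wedge^3 A^*)$.
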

\begin{proof}
Let $\phi$ be an isotropic splitting, and $X,Y,Z\in \Gamma(A)$. Then
$$\pi(\lb \phi(X),\phi(Y)\rb-\phi([X,Y]))=0,$$
so by exactness of the sequence (\ref{exactsequence}), $\lb \phi(X),\phi(Y)\rb-\phi([X,Y])\in \jpi(\Gamma(V\otimes A^*))$. We define $H$ by
$$\begin{array}{rcl}
H(X,Y,Z)&=&\langle\phi(Z),\lb \phi(X),\phi(Y)\rb-\phi([X,Y])\rangle \\
&=&\langle\phi(Z),\lb \phi(X),\phi(Y)\rb\rangle \\
\end{array},$$
where the second equality follows since $\phi$ is an isotropic splitting. It is obvious that $H$ is tensorial in $Z$. Furthermore, making repeated use of the fact that $\phi$ is an isotropic splitting, we check that $H$ is skew-symmetric:
$$\begin{array}{rcl}
\langle\phi(Z),\lb \phi(X),\phi(Y)\rb\rangle &=&\langle\phi(Z),-\lb\phi(Y),\phi(X)\rb+D\langle\phi(X),\phi(Y)\rangle \rangle \\
&=&-\langle\phi(Z),\lb\phi(Y),\phi(X)\rb\rangle \\
\end{array}$$
and
$$\begin{array}{rcl}
0&=&\Lied_X\langle\phi(Z),\phi(Y)\rangle \\
&=&\langle\lb\phi(X),\phi(Z)\rb,\phi(Y)\rangle +\langle\phi(Z),\lb\phi(X),\phi(Y)\rb\rangle \\
\end{array}$$
It follows that $H\in\Gamma(V\otimes\wedge^3 A^*)$.
\end{proof}

\begin{lemma}
Using the notation of the previous lemmas, $ d H=0$.
\end{lemma}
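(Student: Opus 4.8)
The plan is to show $dH=0$ by exploiting the Jacobi identity (AV-\ref{AX4}) for the bracket, applied to three isotropic sections $\phi(X)$, $\phi(Y)$, $\phi(Z)$, and then pairing with a fourth section $\phi(W)$. The starting point is the formula from the previous lemma: for $X,Y\in\Gamma(A)$,
$$\lb\phi(X),\phi(Y)\rb=\phi([X,Y])+\jpi(\iota_X\iota_Y H),$$
together with the two lemmas computing $\lb e,\jpi(\xi)\rb=\jpi(\Lied_{\pi(e)}\xi)$ and $\lb\jpi(\xi),e\rb=-\jpi(\iota_{\pi(e)}d\xi)$. Since $\pi(\phi(X))=X$, these let me reduce every bracket appearing in the Jacobi identity to explicit expressions in $\phi([\cdot,\cdot])$, $\jpi(\iota_\cdot\iota_\cdot H)$, and derived terms.

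First I would compute $\lb\phi(X),\lb\phi(Y),\phi(Z)\rb\rb$ using the lemma to expand the inner bracket, then apply $\lb\phi(X),\cdot\rb$ termwise: on $\phi([Y,Z])$ it produces $\phi([X,[Y,Z]])+\jpi(\iota_X\iota_{[Y,Z]}H)$, and on $\jpi(\iota_Y\iota_Z H)$ it produces $\jpi(\Lied_X(\iota_Y\iota_Z H))$ by Lemma 1. Similarly I would expand $\lb\lb\phi(X),\phi(Y)\rb,\phi(Z)\rb$: the $\phi([X,Y])$ part gives $\phi([[X,Y],Z])+\jpi(\iota_{[X,Y]}\iota_Z H)$, and the $\jpi(\iota_X\iota_Y H)$ part, bracketed on the left against $\phi(Z)$, gives $-\jpi(\iota_Z d(\iota_X\iota_Y H))$ by Lemma 2. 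The third term $\lb\phi(Y),\lb\phi(X),\phi(Z)\rb\rb$ is handled symmetrically. Now I substitute all of this into (AV-\ref{AX4}). The purely $\phi$-valued terms cancel by the Jacobi identity in $A$; what remains is an identity among sections of $\jpi(V\otimes A^*)$, and since $\jpi$ is injective I get an identity in $V\otimes A^*$.

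The remaining identity, after collecting, should read
$$\iota_X\iota_{[Y,Z]}H+\Lied_X(\iota_Y\iota_Z H)
=\iota_{[X,Y]}\iota_Z H-\iota_Z d(\iota_X\iota_Y H)+\iota_Y\iota_{[X,Z]}H+\Lied_Y(\iota_X\iota_Z H),$$
and the task is to recognize this as the statement $\iota_X\iota_Y\iota_Z(dH)=0$. The key will be to repeatedly use the Cartan-type identities listed at the start of the appendix — $[\Lied_X,\iota_Y]=\iota_{[X,Y]}$, $[d,\iota_X]=\Lied_X$, $[\iota_X,\iota_Y]=0$ — to rewrite $\iota_Z d(\iota_X\iota_Y H)$ and the Lie-derivative terms so that everything is expressed in terms of $\iota_X\iota_Y\iota_Z\,dH$ plus terms that cancel. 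Concretely, $dH$ evaluated on $X,Y,Z$ can be expanded via the standard formula $\iota_Z\iota_Y\iota_X\,dH=\sum(\pm)\Lied_\bullet(\iota\iota H)-\sum(\pm)\iota_{[\bullet,\bullet]}\iota H$, and the identity above is exactly the vanishing of this expression. Since $X,Y,Z$ are arbitrary sections and $dH$ is tensorial, this yields $dH=0$.

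I expect the main obstacle to be purely bookkeeping: keeping track of the signs and of which slot each interior product occupies when expanding the nested brackets, and making sure the Cartan identities are applied in the right order so that the leftover terms genuinely assemble into $\iota\iota\iota\,dH$ rather than something off by a sign or a permutation. There is no conceptual difficulty — the argument is forced once the three lemmas are in hand — but the algebra is delicate enough that I would organize it by first writing each of the three terms of (AV-\ref{AX4}) as a sum of a $\phi(\cdots)$-part and a $\jpi(\cdots)$-part, verifying the $\phi$-parts cancel, and only then grinding the $\jpi$-parts against the Cartan calculus.
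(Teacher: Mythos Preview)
Your proposal is correct and follows essentially the same approach as the paper: apply the Jacobi axiom (AV-\ref{AX4}) to $\phi(X),\phi(Y),\phi(Z)$, expand each nested bracket using the formula $\lb\phi(X),\phi(Y)\rb=\phi([X,Y])+\jpi(\iota_X\iota_Y H)$ together with the two previous lemmas, cancel the $\phi$-parts by the Jacobi identity in $A$, and then reduce the remaining $\jpi$-part to $\iota\iota\iota\,dH=0$ via the Cartan identities. The paper differs only cosmetically, stating the needed Cartan identity $d\iota_Z\iota_Y\iota_X+\iota_Z\iota_Y\iota_X d=\Lied_Z\iota_Y\iota_X+\Lied_Y\iota_X\iota_Z+\Lied_X\iota_Z\iota_Y+\iota_Z\iota_{[Y,X]}+\iota_Y\iota_{[X,Z]}+\iota_X\iota_{[Z,Y]}$ up front rather than deriving it in the course of the computation; your mention of pairing with a fourth $\phi(W)$ is unnecessary (and you rightly drop it), since injectivity of $\jpi$ already suffices.
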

\begin{proof}
Using the fact that $[\Lied_X,\iota_Y]=\iota_{[X,Y]}$, it is easy to show that
$$
 d\iota_Z\iota_Y\iota_X+\iota_Z\iota_Y\iota_X d
=\Lied_Z\iota_Y\iota_X+\Lied_Y\iota_X\iota_Z+\Lied_X\iota_Z\iota_Y
+\iota_Z\iota_{[Y,X]}+\iota_Y\iota_{[X,Z]}+\iota_X\iota_{[Z,Y]}.
$$
Let $\phi:A\to\AV$ be an isotropic splitting. We shall use the identification 
$$A\oplus(V\otimes A^*) \xrightarrow{ \phi\oplus\jpi}\AV$$
 explicitly throughout this section. We have, for $X,Y,Z\in \Gamma(A)$,
$$\lb X,Y\rbp=[X,Y]+\iota_X\iota_YH.$$
Then using Axiom~(AV-\ref{AX4}) from the definition of an $AV$-Courant algebroid, we see that
$$\begin{array}{rcl}
0&=&\lb Z,\lb Y,X\rbp\rbp-\lb\lb Z,Y\rbp,X\rbp-\lb Y,\lb Z,X\rbp\rbp\\
&=&\lb Z,[Y,X]+\iota_Y\iota_XH\rbp
-\lb[Z,Y]+\iota_Z\iota_YH,X\rbp-\lb Y,[Z,X]+\iota_Z\iota_XH\rbp\\

&=&\lb Z,[Y,X]\rbp+\Lied_Z\iota_Y\iota_XH-\lb[Z,Y],X\rbp
+\iota_X d\iota_Z\iota_YH-\lb Y,[Z,X]\rbp-\Lied_Y\iota_Z\iota_XH\\

&=&\lb Z,[Y,X]\rbp-\lb[Z,Y],X\rbp-\lb Y,[Z,X]\rbp\\
&&+\Lied_Z\iota_Y\iota_XH
+\Lied_X\iota_Z\iota_YH+\Lied_Y\iota_X\iota_ZH- d\iota_Z\iota_Y\iota_XH\\

&=&[Z,[Y,X]]+\iota_Z\iota_{[Y,X]}H-[[Z,Y],X]-\iota_{[Z,Y]}\iota_XH-[Y,[Z,X]]-\iota_Y\iota_{[Z,X]}H\\
&&+\Lied_Z\iota_Y\iota_XH
+\Lied_X\iota_Z\iota_YH+\Lied_Y\iota_X\iota_ZH- d\iota_Z\iota_Y\iota_XH\\

&=&[Z,[Y,X]]-[[Z,Y],X]-[Y,[Z,X]]
+\iota_Z\iota_{[Y,X]}H+\iota_X\iota_{[Z,Y]}H+\iota_Y\iota_{[X,Z]}H\\
&&+\Lied_Z\iota_Y\iota_XH+\Lied_X\iota_Z\iota_YH+\Lied_Y\iota_X\iota_ZH
- d\iota_Z\iota_Y\iota_XH\\

&=&\iota_Z\iota_Y\iota_X d H.
\end{array}$$
\end{proof}

\begin{lemma}
Let $\phi:A\to\AV$ and $\psi:A\to\AV$ be two isotropic splittings, and let $H_\phi$ and $H_\psi$ be the elements of $\Gamma(V\otimes\wedge^3 A^*)$ associated to the corresponding splittings. Namely, if $X,Y\in \Gamma(A)$, then $\lb \phi(X),\phi(Y)\rb=\phi([X,Y])+\jpi\iota_X\iota_YH_\phi$, and similarly for $H_\psi$.

Then there exists $\beta\in\Gamma(V\otimes\wedge^2 A^*)$ such that $\psi(X)=\phi(X)+\jpi(\iota_X\beta)$ and $H_\psi=H_\phi- d\beta$.
\end{lemma}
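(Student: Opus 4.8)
The plan is to first determine how the two splittings differ as bundle maps, and then substitute that difference into the bracket formula supplied by the earlier lemmas. Since both $\phi$ and $\psi$ split the exact sequence (\ref{exactsequence}), we have $\pi\bigl(\psi(X)-\phi(X)\bigr)=0$ for every $X\in\Gamma(A)$, so by exactness there is a unique bundle map $\alpha:A\to V\otimes A^*$ with $\psi(X)-\phi(X)=\jpi(\alpha(X))$. I claim $\alpha$ is of the form $X\mapsto\iota_X\beta$ for a (unique) $\beta\in\Gamma(V\otimes\wedge^2 A^*)$; equivalently, that $\iota_Y\alpha(X)=-\iota_X\alpha(Y)$. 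To get this I would expand $0=\langle\psi(X),\psi(Y)\rangle$ using $\mathbb{R}$-bilinearity of $\langle\cdot,\cdot\rangle$: the $\phi$--$\phi$ term vanishes because $\phi(A)$ is isotropic; the $\jpi$--$\jpi$ term $\langle\jpi(\alpha(X)),\jpi(\alpha(Y))\rangle=\iota_{\pi\jpi(\alpha(X))}\alpha(Y)$ vanishes because $\pi\circ\jpi=0$; and each cross term is computed from (\ref{bracketcond}) together with $\pi\circ\phi=\id$, giving $\langle\phi(X),\jpi(\alpha(Y))\rangle=\iota_X\alpha(Y)$ and $\langle\jpi(\alpha(X)),\phi(Y)\rangle=\iota_Y\alpha(X)$. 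Hence $0=\iota_X\alpha(Y)+\iota_Y\alpha(X)$, the required skew-symmetry, so $\beta$ exists and $\psi(X)=\phi(X)+\jpi(\iota_X\beta)$.

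Next I would compute $\lb\psi(X),\psi(Y)\rb=\lb\phi(X)+\jpi(\iota_X\beta),\,\phi(Y)+\jpi(\iota_Y\beta)\rb$ by expanding into four terms using additivity of the Courant bracket, and evaluate each using results already established in this appendix: $\lb\phi(X),\phi(Y)\rb=\phi([X,Y])+\jpi(\iota_X\iota_Y H_\phi)$; the lemma $\lb e,\jpi(\xi)\rb=\jpi(\Lied_{\pi(e)}\xi)$ gives $\lb\phi(X),\jpi(\iota_Y\beta)\rb=\jpi(\Lied_X\iota_Y\beta)$; the lemma $\lb\jpi(\xi),e\rb=-\jpi(\iota_{\pi(e)}d\xi)$ gives $\lb\jpi(\iota_X\beta),\phi(Y)\rb=-\jpi(\iota_Y d\iota_X\beta)$; and the same lemma together with $\pi\circ\jpi=0$ forces $\lb\jpi(\iota_X\beta),\jpi(\iota_Y\beta)\rb=0$. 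Subtracting $\psi([X,Y])=\phi([X,Y])+\jpi(\iota_{[X,Y]}\beta)$ and using injectivity of $\jpi$, the desired identity $H_\psi=H_\phi-d\beta$ becomes the purely Cartan-calculus statement
$$\Lied_X\iota_Y\beta-\iota_Y d\iota_X\beta-\iota_{[X,Y]}\beta=-\iota_X\iota_Y d\beta .$$

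Finally I would verify this identity from the relations listed at the start of the appendix. Using $[\Lied_X,\iota_Y]=\iota_{[X,Y]}$ we have $\Lied_X\iota_Y\beta=\iota_Y\Lied_X\beta+\iota_{[X,Y]}\beta$, so the left-hand side collapses to $\iota_Y(\Lied_X\beta-d\iota_X\beta)$; then $\Lied_X=d\iota_X+\iota_X d$ turns this into $\iota_Y\iota_X d\beta$, which equals $-\iota_X\iota_Y d\beta$ since $[\iota_X,\iota_Y]=0$. This closes the argument. I do not anticipate a genuine obstacle: the only steps requiring care are the bookkeeping in the first paragraph — confirming that the skew-symmetry of $\alpha$ really falls out of isotropy of $\psi(A)$ and (\ref{bracketcond}) — and keeping the signs straight when invoking the two $\jpi$-bracket lemmas in the second paragraph.
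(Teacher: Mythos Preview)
Your proposal is correct and follows essentially the same route as the paper's proof: both establish the existence of $\beta$ from the isotropy of the two splittings via (\ref{bracketcond}), then expand $\lb\psi(X),\psi(Y)\rb$ using the two $\jpi$-bracket lemmas and reduce the comparison of $H_\psi$ with $H_\phi$ to the same Cartan-calculus identity. Your write-up is in fact slightly more explicit than the paper's in justifying why the $\jpi$--$\jpi$ pairing and bracket terms vanish.
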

\begin{proof}
Since $\phi$ and $\psi$ are splittings, we see that
$$\pi((\phi-\psi)(X))=0.$$
Thus, by the exactness of the sequence (\ref{exactsequence}), $(\phi-\psi)(X)=\jpi\circ S(X)$ for some linear map $S:A\to V\otimes A^*$.

However since the splittings are isotropic,
$$\begin{array}{rcl}
0&=&\langle\phi(X),\phi(Y)\rangle \\
&=&\langle\psi(X)+\jpi\circ S(X),\psi(Y)+\jpi\circ S(Y)\rangle\\
&=&S(X)(Y)+S(Y)(X),\\
\end{array}$$
so we can define $\beta\in\Gamma(V\otimes\wedge^2 A^*)$ by $\iota_X\beta=S(X)$. Then, we see that
$$\begin{array}{rcl}
\psi([X,Y])+\iota_X\iota_YH_\psi&=&\lb\phi(X)+\jpi(\iota_X\beta),\phi(Y)+\jpi(\iota_Y\beta)\rb\\
&=&\phi([X,Y])+\jpi(\Lied_X\iota_Y\beta-\iota_Y d\iota_X\beta+\iota_X\iota_YH_\phi)\\
&=&\phi([X,Y])+\jpi(\iota_X\iota_YH_\phi)
+\jpi(\Lied_X\iota_Y\beta-\iota_Y\Lied_X\beta+\iota_Y\iota_X d\beta)\\
&=&\phi([X,Y])+\jpi(\iota_X\iota_YH_\phi)
+\jpi(\iota_{[X,Y]}\beta+\iota_Y\iota_X d\beta)\\
&=&\phi([X,Y])+\jpi(\iota_{[X,Y]}\beta)
+\jpi(\iota_X\iota_YH_\phi-\iota_X\iota_Y d\beta)\\
&=&\psi([X,Y])+\jpi(\iota_X\iota_YH_\phi-\iota_X\iota_Y d\beta),\\
\end{array}$$
so we have
$$H_\psi=H_\phi- d\beta.$$
\end{proof}


\begin{thebibliography}{9}                                                                                                %

\bibitem{BC05} P. Bressler, A. Chervov, \textit{Courant Algebroids}, Journal of Mathematical Sciences, \textbf{128} (2005), 3030-3053, \textit{arXiv:hep-th/0501247}

\bibitem{CL07C} Z. Chen, Z.-J. Liu, \textit{On (co-)morphisms of Lie pseudoalgebras and groupoids}, J. Algebra \textbf{316} (2007), 1-31,
\textit{arXiv:0710.2149}.

\bibitem{CL07Om} Z. Chen, Z.-J. Liu, \textit{Omni-Lie alegebroids}, (2007), \textit{arXiv:0710.1923}.

\bibitem{CLS08Om} Z. Chen, Z.-J. Liu, Y.-H. Sheng, \textit{Dirac structures of omni-Lie algebroids}, (2008), \textit{arXiv:0802.3819}.

\bibitem{CLS08E} Z. Chen, Z.-J. Liu, Y.-H. Sheng, \textit{E-Courant Algebroids}, (2008), \textit{arXiv:0805.4093}.

\bibitem {CW88}T. Courant, A. Weinstein, \textit{Beyond Poisson Structures}, Travaux en Cours, \textbf{27} (1988), 39-49, Hermann, Paris.

\bibitem {C90}T. Courant \textit{Dirac Manifolds}, Trans. Amer. Math. Soc., \textbf{319} (1990), 631-661.

\bibitem{CF03}M. Crainic, R. L. Fernandes \textit{Integrability of Lie brackets}, Annals of Math., \textbf{157} (2003), 575-620. \textit{arXiv:math/0105033}

\bibitem{DW00} A. C. da Silva, A. Weinstein, \textit{Geometric Models for Noncommutative Algebras}, Berkeley Math. Lect. Notes \textbf{10}, (2000).

\bibitem{F02}R. L. Fernandes \textit{Lie Algebroids, Holonomy and Characteristic Classes}, Adv. in Math., \textbf{170} (2002),
119-179. \textit{arXiv:math/0007132}

\bibitem{GM03} J. Grabowski, G. Marmo, \textit{The graded Jacobi algebras and (co)homology}, J. Phys. A: Math. Gen. \textbf{36} (2003), 161-81.

\bibitem{G84} Guedira F., Lichnerowicz A. \textit{G\'{e}ometrie des alg\`{e}bres de Lie locales de Kirillov}, J. Math. Pures Appl. \textbf{63} (1984), 407-84

\bibitem{G07} M. Gualtieri, \textit{Generalized Complex Geometry}, \textit{arXiv:math/0703298v2}

\bibitem{H03} N. Hitchin, \textit{Generalized Calabi-Yau manifolds}, Q. J. Math. \textbf{54} (2003), 281-308.

\bibitem{IW05} D. Iglesias and A. Wade, \textit{Contact manifolds and generalized complex structures}, J. Geom. Phys. \textbf{53} (2005), 249-258. \textit{arXiv:math/0404519v2}

\bibitem{IW06} D. Iglesias and A. Wade, \textit{Integration of Dirac-Jacobi structures}, J. Phys. A: Math. Gen., Volume \textbf{39}, Issue \textbf{16} (2006), 4181-4190. \textit{arXiv:math/0507538v2}

\bibitem{K76} Kirillov A., \textit{Local Lie algebra}, Russ. Math. Surv. \textbf{31} (1976), 55-75

\bibitem{LMTZ05} U. Lindstr\={o}m, R. Minasian, A. Tomasiello and M. Zabzine, \textit{Generalized complex manifolds and supersymmetry}. Comm. Math. Phys. \textbf{257} (2005), 235-256.

\bibitem{M05} K. Mackenzie, \textit{General Theory of Lie Groupoids and Lie Algebroids}, London Math. Soc. Lec. Series Notes,\textbf{213} (2005).

\bibitem{MX00} K. Mackenzie, P. Xu, \textit{Integration of Lie bialgebroids}, Topology, \textbf{39} (2000), 445-467.

\bibitem{M97} Charles-Michel Marle, \textit{The Schouten-Nijenhuis bracket and interior products}, Journal of Geometry and Physics, \textbf{23}, 350-359, 1997.

\bibitem{MM02} I. Moerdijk, J. Mr\v{c}un, \textit{On integrability of infinitesimal actions}, Amer. J. Math. \textbf{124} (2002), 567-593.

\bibitem{NC04} J. M. Nunes da Costa and J. Clemente-Gallardo, \textit{Dirac structures for generalized Lie bialgebroids}, J. Phys. A: Math. Gen. \textbf{37} (2004), 2671-2692.

\bibitem{SW01} P. \u{S}evera and A. Weinstein, \textit{Poisson geometry with a 3-form background}, Prog. Theor. Phys. Suppl. \textbf{144} (2001), 145-154. \textit{arXiv:math/0107133}

\bibitem{S74} K. Shiga,\textit{Cohomology of Lie algebras over a manifold I, II}, J. Math Soc. Japan \textbf{26} (1974), 324-61

\bibitem{V08} I. Vaisman, \textit{Generalized CRF-structures}, Geometriae Dedicata \textbf{133} no. \textbf{1} (2008), 129-154. \textit{arxiv:0705.3934}.


\bibitem {W00}Wade, A., \textit{Conformal Dirac Structures}, Lett. Math. Phys., \textbf{53} (2000), 331-348.
\textit{arXiv: math.Sg/0101181}.




\end{thebibliography}
\end{document}